\theoremstyle{plain}
\newtheorem{lem}{Lemma}[section]
\newtheorem{cor}[lem]{Corollary}
\newtheorem{prop}[lem]{Proposition}
\newtheorem{thm}[lem]{Theorem}
\newtheorem{intthm}{Theorem}
\theoremstyle{definition}
\newtheorem{defn}[lem]{Definition}
\newtheorem{ex}[lem]{Example}
\newtheorem{question}[lem]{Question}
\newtheorem{disc}[lem]{Remark}
\newtheorem{fact}[lem]{Fact}
\newcommand{\ann}{\operatorname{Ann}}
\newcommand{\len}{\operatorname{len}}
\newcommand{\ext}{\operatorname{Ext}}
\newcommand{\HH}{\operatorname{H}}
\newcommand{\Hom}{\operatorname{Hom}}	
\newcommand{\spec}{\operatorname{Spec}}
\newcommand{\im}{\operatorname{Im}}
\newcommand{\Ker}{\operatorname{Ker}}
\newcommand{\ideal}[1]{\mathfrak{#1}}
\newcommand{\m}{\ideal{m}}
\newcommand{\comp}[1]{\widehat{#1}}
\newcommand{\supp}{\operatorname{Supp}}
\newcommand{\into}{\hookrightarrow}
\renewcommand{\geq}{\geqslant}
\renewcommand{\ker}{\Ker}
\renewcommand{\hom}{\Hom}
\newcommand{\Ext}[4][R]{\operatorname{Ext}_{#1}^{#2}(#3,#4)}
\newcommand{\Otimes}[3][R]{#2\otimes_{#1}#3}
\renewcommand{\Hom}[3][R]{\operatorname{Hom}_{#1}(#2,#3)}	
\newcommand{\Tor}[4][R]{\operatorname{Tor}^{#1}_{#2}(#3,#4)}
\newcommand{\md}[1]{#1^{\vee}}
\newcommand{\mdd}[1]{#1^{\vee \vee}}
\newcommand{\mfs}{\mathfrak{S}_0(R)}
\newcommand{\mfq}{\mathfrak{Q}_0(R)}
\numberwithin{equation}{lem}
\begin{document}

\bibliographystyle{amsplain}

\author{Bethany Kubik}

\address{Bethany Kubik, Department of Mathematical Sciences,
601 Thayer Road \#222,
West Point, NY 10996
USA}

\email{bethany.kubik@usma.edu}

\urladdr{http://www.dean.usma.edu/departments/math/people/kubik/}


\title 
{Quasidualizing Modules}


\keywords{quasidualizing, semidualizing, Matlis duality, Hom, tensor product, artinian,
noetherian}
\subjclass[2010]{Primary 13D07, 13E10, 13J10}

\begin{abstract}

We introduce and study ``quasidualizing" modules.
An artinian $R$-module $T$ is \emph{quasidualizing}
if the homothety map $\comp R\rightarrow\hom TT$ is an isomorphism and 
$\Ext{i}{T}{T}=0$ for each integer $i>0$.
Quasidualizing modules are associated to semidualizing modules
via Matlis duality.  
We investigate the associations
via Matlis duality between subclasses of  the Auslander class and Bass class
and subclasses of derived $T$-reflexive modules.
\end{abstract}

\maketitle

\section*{Introduction}

 Let $R$ be a commutative local noetherian ring with
 maximal ideal $\m$ and residue field $k=R/\m$.  The $\m$-adic
 completion of $R$ is denoted $\comp R$, the injective hull of $k$
 is $E=E_R(k)$, and the Matlis duality functor is $(-)^\vee=\hom -E$.
 
The motivation for this work comes from
the study of semidualizing modules.
Semidualizing modules were first introduced by Vasconcelos~\cite{vasconcelos:dtmc}.
A finitely generated $R$-module $C$ is \emph{semidualizing}
if the homothety map $R\rightarrow\hom CC$ is an isomorphism and 
$\Ext{i}{C}{C}=0$ for each integer $i>0$.  For example, $R$ is always a
semidualizing $R$-module.
Therefore duality with respect to $R$ is a special case
of duality with respect to a semidualizing module,
as is duality with respect to a dualizing $R$-module
when $R$ has one.  One the other hand,
Matlis duality is not covered in this way.  The goal of this paper is to remedy this by
introducing and studying the ``quasidualizing" modules:
An artinian $R$-module $T$ is \emph{quasidualizing}
if the homothety map $\comp R\rightarrow\hom TT$ is an isomorphism and 
$\Ext{i}{T}{T}=0$ for each integer $i>0$; see Definition~\ref{defn101215j}.
  For example, $E$ is always a
quasidualizing module.

This paper is concerned 
with the properties of quasidualizing modules and how they
compare with the properties of semidualizing modules.  
For instance, the next result gives a direct link between quasidualizing
modules and semidualizing modules
via Matlis duality; see Theorem~\ref{prop110113}.

\begin{intthm}\label{intthm110215}
If $R$ is complete, then the set of isomorphism classes
of semidualizing $R$-modules is in bijection with the set of isomorphism
classes of quasidualizing $R$-modules
by Matlis duality.
\end{intthm}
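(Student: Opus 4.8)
The plan is to verify that Matlis duality $(-)^{\vee}=\hom{-}{E}$ carries semidualizing modules to quasidualizing modules and conversely, and that the two assignments are mutually inverse on isomorphism classes. I will use three standard features of Matlis duality over the complete local ring $R$: the functor $(-)^{\vee}$ is exact and contravariant; it restricts to a duality between the category of noetherian $R$-modules and the category of artinian $R$-modules; and the biduality map $\delta_M\colon M\to M^{\vee\vee}$ is an isomorphism whenever $M$ is noetherian or artinian. In particular, if $C$ is finitely generated then $C^{\vee}$ is artinian, if $T$ is artinian then $T^{\vee}$ is finitely generated, and $C^{\vee\vee}\cong C$, $T^{\vee\vee}\cong T$.

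The technical heart is a single computation: for any finitely generated $R$-module $C$ there are isomorphisms $\Ext{i}{C^{\vee}}{C^{\vee}}\cong\Ext{i}{C}{C}$ for all $i\ge 0$ which, in degree $0$, carry the homothety map $R\to\hom{C^{\vee}}{C^{\vee}}$ to the homothety map $R\to\hom{C}{C}$. To obtain this I would assemble three elementary isomorphisms. First, for arbitrary modules $A$ and $N$, exactness of $(-)^{\vee}$ together with Hom-tensor adjunction, applied to a projective resolution of $A$, gives a natural isomorphism $\Ext{i}{A}{N^{\vee}}\cong(\Tor{i}{A}{N})^{\vee}$. Second, for $C$ finitely generated, a resolution $F_\bullet$ of $C$ by finite free modules identifies the complexes $(\hom{F_\bullet}{C})^{\vee}$ and $F_\bullet\otimes_R C^{\vee}$ via tensor evaluation, so that $\Tor{i}{C}{C^{\vee}}\cong(\Ext{i}{C}{C})^{\vee}$. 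Third, $\Tor$ is symmetric in its two arguments. Taking $A=C^{\vee}$ and $N=C$ in the first isomorphism and then applying the other two yields
\[
\Ext{i}{C^{\vee}}{C^{\vee}}\cong(\Tor{i}{C^{\vee}}{C})^{\vee}\cong(\Tor{i}{C}{C^{\vee}})^{\vee}\cong(\Ext{i}{C}{C})^{\vee\vee}\cong\Ext{i}{C}{C},
\]
the final step being biduality, valid because $\Ext{i}{C}{C}$ is finitely generated and $R$ is complete. For the degree-$0$ statement I would note separately that $f\mapsto f^{\vee}$ sends $r\cdot 1_C$ to $r\cdot 1_{C^{\vee}}$ and that $(-)^{\vee}\colon\hom{C}{C}\to\hom{C^{\vee}}{C^{\vee}}$ is a bijection, with inverse obtained by dualizing again and conjugating by $\delta_C$.

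The theorem then follows formally. If $C$ is semidualizing, then $C^{\vee}$ is artinian, $\Ext{i}{C^{\vee}}{C^{\vee}}\cong\Ext{i}{C}{C}=0$ for $i>0$, and the homothety map $\comp R=R\to\hom{C^{\vee}}{C^{\vee}}$ is an isomorphism because $R\to\hom{C}{C}$ is; hence $C^{\vee}$ is quasidualizing. Conversely, if $T$ is quasidualizing, set $C=T^{\vee}$, which is finitely generated; applying the computation to this $C$ together with the identification $C^{\vee}=T^{\vee\vee}\cong T$ gives $\Ext{i}{T}{T}\cong\Ext{i}{C}{C}$ for all $i$ and an identification of $\hom{T}{T}$ with $\hom{C}{C}$ compatible with homothety maps, so the defining properties of $T$ force $C$ to be semidualizing. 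Finally, since $(-)^{\vee}$ preserves isomorphisms and $\delta$ is a natural isomorphism on the categories of noetherian and of artinian modules, the assignments $C\mapsto C^{\vee}$ and $T\mapsto T^{\vee}$ descend to mutually inverse bijections between the isomorphism classes of semidualizing $R$-modules and those of quasidualizing $R$-modules.

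I expect the main obstacle to be the bookkeeping around the homothety maps. The chain of abstract isomorphisms is essentially automatic, but to read off the \emph{defining} condition of a (quasi)dualizing module one must know that the identification $R\cong\hom{C}{C}\cong\hom{C^{\vee}}{C^{\vee}}$ is precisely the one given by the two homothety maps, which requires tracking the naturality of biduality and of Hom-tensor adjunction through the argument. A secondary point of care is to keep separate the biduality isomorphisms that hold over any ring (for artinian modules) from those that need completeness (for finitely generated modules); it is the latter that make the hypothesis that $R$ is complete indispensable.
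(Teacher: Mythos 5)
Your proposal is correct and follows essentially the same route as the paper's Theorem~\ref{prop110113}: both directions are verified by checking that Matlis duality exchanges the three defining conditions, using Hom--tensor adjointness together with biduality over the complete ring. The only cosmetic differences are that you establish $\Ext{i}{\md{C}}{\md{C}}\cong\Ext{i}{C}{C}$ via tensor-evaluation on a finite free resolution and biduality of the finitely generated module $\Ext{i}{C}{C}$, whereas the paper's Lemma~\ref{lem110120} uses biduality of $C$ itself, and you transfer the homothety condition by the functoriality of $(-)^{\vee}$ rather than by the paper's explicit adjointness diagram.
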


Following the literature on semidualizing modules, we use
quasidualizing modules to define other classes of modules.
For instance, given an $R$-module $M$, we consider
the class $\mathcal{G}_M^{\text{full}}(R)$ of 
``derived $M$-reflexive $R$-modules" and
their subclasses $\mathcal{G}_M^{\text{noeth}}(R)$
and $\mathcal{G}_M^{\text{artin}}(R)$ of noetherian
modules and artinian modules respectively.  We also
consider subclasses of the Auslander class
$\mathcal{A}_M(R)$ and the Bass class
$\mathcal{B}_M(R)$.  
See Section~\ref{sec110223} for definitions.
Some relations between these classes
are listed in the next result which is proved in Section~\ref{sec110301}.

\begin{intthm}\label{intthm110223}
Assume $R$ is complete, and let $T$ be a quasidualizing $R$-module.
Then we have the following inverse equivalences and equalities
\begin{enumerate}[\rm(i)]
\item 
$\xymatrix{
\mathcal{B}_{\md{T}}^{\text{noeth}}(R)\ar@<1ex>[r]^-{\md{(-)}}
&\mathcal{G}_{T}^{\text{artin}}(R)
=\mathcal{A}_{\md{T}}^{\text{artin}}(R)\ar@<1ex>[l]^-{\md{(-)}}
}$;
\item
$\xymatrix{
\mathcal{B}_{\md{T}}^{\text{artin}}(R)\ar@<1ex>[r]^-{\md{(-)}}
&\mathcal{G}_{T}^{\text{noeth}}(R)
=\mathcal{A}_{\md{T}}^{\text{noeth}}(R)\ar@<1ex>[l]^-{\md{(-)}}
}$; 
\item 
$\xymatrix{
\mathcal{B}_{T}^{\text{artin}}(R)\ar@<1ex>[r]^-{\md{(-)}}
&\mathcal{G}_{\md{T}}^{\text{noeth}}(R)
=\mathcal{A}_{T}^{\text{noeth}}(R)\ar@<1ex>[l]^-{\md{(-)}}
}$; and
\item 
$\xymatrix{
\mathcal{B}_{T}^{\text{noeth}}(R)\ar@<1ex>[r]^-{\md{(-)}}
&\mathcal{G}_{\md{T}}^{\text{artin}}(R)
=\mathcal{A}_{T}^{\text{artin}}(R)\ar@<1ex>[l]^-{\md{(-)}}
}$.
\end{enumerate}
\end{intthm}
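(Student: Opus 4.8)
The plan is to reduce the statement to the Foxby equivalence for a semidualizing module, together with the exactness and involutivity of Matlis duality over the complete ring $R$. Put $C=\md{T}$; by Theorem~\ref{intthm110215}, $C$ is a semidualizing $R$-module and $\mdd{T}\cong T$, so $T\cong\md{C}$ as well. Completeness of $R$ means that $(-)^{\vee}$ is an exact, faithful functor restricting to an involutive duality between noetherian and artinian $R$-modules, with $\mdd{X}\cong X$ for every such $X$. I would prove two things: (1) Matlis duality interchanges the Auslander and Bass classes of $C$, and of $T$, on noetherian and artinian modules; and (2) on those modules, being derived $T$-reflexive is the same as lying in $\mathcal{A}_{C}(R)$, while being derived $C$-reflexive is the same as having Matlis dual in $\mathcal{B}_{T}(R)$.

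For (1), I would show that for $B\in\{C,T\}$ and $X$ noetherian or artinian, $X\in\mathcal{B}_{B}(R)$ if and only if $\md{X}\in\mathcal{A}_{B}(R)$, with the Foxby functors $B\lotimes_{R}-$ and $\Rhom{B}{-}$ intertwined by $(-)^{\vee}$. This is a computation with standard natural isomorphisms: exactness of $(-)^{\vee}$ gives $\Ext{i}{B}{\md{X}}\cong\md{\Tor{i}{B}{X}}$; Hom--tensor adjunction gives $\Rhom{B}{\md{X}}\simeq\mdp{B\lotimes_{R}X}$; and, since $C$ is finitely generated (whence the analogous formulas for $T=\md{C}$ via the identities already in the paper), Hom-evaluation gives $B\lotimes_{R}\md{X}\simeq\md{\Rhom{B}{X}}$. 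Under these, the unit and counit defining $\mathcal{A}_{B}(R)$ and $\mathcal{B}_{B}(R)$ correspond, the homological boundedness conditions transfer because $(-)^{\vee}$ is exact and faithful, and $\mdd{X}\cong X$ yields the converse. This already gives the horizontal equivalences in (i)--(iv); the two functors are mutually inverse because $\mdd{(-)}\cong\mathrm{id}$ on noetherian and on artinian modules.

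For (2), use $T=\md{C}$ and adjunction to get $\Rhom{X}{T}\simeq\mdp{C\lotimes_{R}X}$; a second adjunction together with Hom-evaluation (legitimate because $C$ is finitely generated and $E$ is injective) identifies $\Rhom{\Rhom{X}{T}}{T}\simeq\Rhom{C}{C\lotimes_{R}X}$ so that the biduality morphism $X\to\Rhom{\Rhom{X}{T}}{T}$ becomes the Foxby unit $X\to\Rhom{C}{C\lotimes_{R}X}$; hence $X$ is derived $T$-reflexive precisely when this unit is an isomorphism with $C\lotimes_{R}X$ and $\Rhom{C}{C\lotimes_{R}X}$ homologically bounded, i.e.\ $X\in\mathcal{A}_{C}(R)$. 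Dually $\Rhom{T}{\md{X}}\simeq\Rhom{X}{C}$ and $T\lotimes_{R}\Rhom{X}{C}\simeq\md{\Rhom{\Rhom{X}{C}}{C}}$, so the Foxby counit $T\lotimes_{R}\Rhom{T}{\md{X}}\to\md{X}$ is the Matlis dual of the biduality morphism $X\to\Rhom{\Rhom{X}{C}}{C}$; since $(-)^{\vee}$ is exact and faithful with $\mdd{X}\cong X$, one side is an isomorphism exactly when the other is and the boundedness conditions match, so $X$ is derived $C$-reflexive if and only if $\md{X}\in\mathcal{B}_{T}(R)$. (One uses here that $\Rhom{X}{C}$ is homologically finite for $X$ noetherian or artinian; for artinian $X$ this holds because $X$ is $\m$-torsion and $C$ has finite Bass numbers, so $\Rhom{X}{C}\simeq\Rhom{X}{\RG{m}{C}}$ has finitely generated homology in a finite range.)

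Assembling: (2) gives $\mathcal{G}_{T}^{\text{artin}}(R)=\mathcal{A}_{C}^{\text{artin}}(R)$ and $\mathcal{G}_{T}^{\text{noeth}}(R)=\mathcal{A}_{C}^{\text{noeth}}(R)$ directly, while (2) combined with (1) for $T$ gives $\mathcal{G}_{C}^{\text{noeth}}(R)=\mathcal{A}_{T}^{\text{noeth}}(R)$ and $\mathcal{G}_{C}^{\text{artin}}(R)=\mathcal{A}_{T}^{\text{artin}}(R)$ — these are the equalities in (i)--(iv) (recall $\mathcal{G}_{\md{T}}=\mathcal{G}_{C}$). The equivalences are (1) itself, with $(-)^{\vee}$ carrying $\mathcal{B}_{C}^{\text{noeth}}(R)$ and $\mathcal{B}_{C}^{\text{artin}}(R)$ to $\mathcal{A}_{C}^{\text{artin}}(R)$ and $\mathcal{A}_{C}^{\text{noeth}}(R)$ for (i) and (ii), and the derived $C$-reflexivity half of (2) showing directly that $\md{(-)}$ carries $\mathcal{G}_{C}^{\text{noeth}}(R)$ and $\mathcal{G}_{C}^{\text{artin}}(R)$ to $\mathcal{B}_{T}^{\text{artin}}(R)$ and $\mathcal{B}_{T}^{\text{noeth}}(R)$ for (iii) and (iv). The main obstacle is step (2): checking that the chain of adjunction and evaluation isomorphisms really does carry the biduality morphism to the Foxby unit and counit (so that ``is an isomorphism'' transfers both ways), and bookkeeping which $\Ext$ and $\Tor$ modules are noetherian versus artinian and which complexes are homologically bounded — exactly the places where finiteness of $C$ and completeness of $R$ are indispensable.
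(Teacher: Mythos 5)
Your outline is correct in substance and draws on the same toolbox the paper uses: exactness, faithfulness, and involutivity of $(-)^{\vee}$ on noetherian and artinian modules over the complete ring, Hom--tensor adjointness, Hom-evaluation, and the reflexivity of $\Hom$ and $\otimes$ of artinian/noetherian modules from \cite{kubik:hamm1}. The difference is the factorization. The paper never isolates your step (1), the Matlis-dual interchange $\mathcal{B}_{B}(R)\leftrightarrow\mathcal{A}_{B}(R)$; instead it proves the two dualities $\mathcal{B}_{\md{T}}^{\text{mr}}(R)\leftrightarrow\mathcal{G}_{T}^{\text{mr}}(R)$ and $\mathcal{B}_{T}^{\text{mr}}(R)\leftrightarrow\mathcal{G}_{\md{T}}^{\text{mr}}(R)$ directly (Propositions~\ref{prop110225} and~\ref{prop120120}), and separately the equalities $\mathcal{G}_{\md{T}}^{\text{mr}}(R)=\mathcal{A}_{T}^{\text{mr}}(R)$ and $\mathcal{G}_{T}^{\text{mr}}(R)=\mathcal{A}_{\md{T}}^{\text{mr}}(R)$ (Propositions~\ref{prop110130} and~\ref{prop120116}), all at the level of Matlis reflexive modules before restricting to the noetherian and artinian subclasses. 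Your two halves of step (2) are essentially Propositions~\ref{prop120116} and~\ref{prop120120}; your step (1) replaces the other two propositions and buys a cleaner symmetry (a single $\mathcal{B}\leftrightarrow\mathcal{A}$ lemma for a fixed module), at the cost of an extra composition to recover the stated equivalences and of having to verify the diagram chases relating the biduality map to the Foxby unit and counit, which the paper writes out explicitly. One caveat to repair: for $B=T$ you cannot invoke Hom-evaluation to obtain $\Otimes{T}{\md{X}}\cong\md{\Hom{T}{X}}$, since $T$ is artinian rather than finitely generated (Fact~\ref{fact110125} does not apply); there you must argue as the paper does, writing $\Hom{T}{X}\cong\Hom{T}{\mdd{X}}\cong\md{(\Otimes{T}{\md{X}})}$ by adjointness and then dualizing back using that $\Otimes{T}{\md{X}}$ is Matlis reflexive by \cite[Lemma 1.19, Theorem 3.1, Corollary 3.9]{kubik:hamm1}. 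With that substitution, and with the routine bookkeeping that $(-)^{\vee}$ swaps the noetherian and artinian subclasses (Fact~\ref{fact110120b}), your plan assembles into a complete proof.
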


As a consequence of the previous result, we conclude that
the classes $\mathcal{G}_{\md{T}}^{\text{noeth}}(R)$
and $\mathcal{G}_T^{\text{artin}}(R)$ are substantially different.
For instance, as we observe next $\mathcal{G}_T^{\text{artin}}(R)$
satisfies the two-of-three condition, while 
the class $\mathcal{G}_{\md{T}}^{\text{noeth}}(R)$ does not; see Theorem~\ref{prop110125}.

\begin{intthm}\label{intthm110215}
Assume that $R$ is complete, and let $T$ be a quasidualizing
$R$-module.  Then $\mathcal{G}_T^{\text{artin}}(R)$ satisfies the two-of-three condition, that is,
given an exact sequence of $R$-module homomorphisms
$0\rightarrow L_1\rightarrow L_2\rightarrow L_3\rightarrow 0$
if any two of the modules are in $\mathcal{G}_T^{\text{artin}}(R)$, then
so is the third.
\end{intthm}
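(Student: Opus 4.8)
The plan is to transfer the statement to the Auslander class of a semidualizing module, where the two-of-three property is classical. Since $R$ is complete and $T$ is quasidualizing, the Matlis dual $\md T$ is a semidualizing $R$-module (see Theorem~\ref{prop110113}), and by the definitions of the classes together with Theorem~\ref{intthm110223}(i) one has
\[
\mathcal{G}_T^{\text{artin}}(R)=\mathcal{A}_{\md T}^{\text{artin}}(R)=\mathcal{A}_{\md T}(R)\cap\{\text{artinian }R\text{-modules}\}.
\]
An intersection of two classes each satisfying the two-of-three condition again satisfies it, and the class of artinian $R$-modules obviously does, since a submodule or quotient of an artinian module is artinian and an extension of an artinian module by an artinian module is artinian. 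So the entire content is the assertion that the Auslander class $\mathcal{A}_C(R)$ of a semidualizing module $C=\md T$ is closed under two-of-three.

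This is a standard fact; I would either invoke it from the literature or argue directly as follows. Recall that $M\in\mathcal{A}_C(R)$ precisely when $\Tor{i}{C}{M}=0=\Ext{i}{C}{C\otimes_R M}$ for all $i\geq 1$ and the natural map $\gamma_M\colon M\to\Hom{C}{C\otimes_R M}$ is an isomorphism. Given a short exact sequence $0\to M_1\to M_2\to M_3\to 0$, treat the three cases. If $M_2,M_3\in\mathcal{A}_C(R)$ (the ``kernel'' case) or $M_1,M_3\in\mathcal{A}_C(R)$ (the ``extension'' case), the long exact sequence in $\Tor{*}{C}{-}$ gives the required $\Tor$-vanishing for the remaining module with no trouble, and consequently $0\to C\otimes_R M_1\to C\otimes_R M_2\to C\otimes_R M_3\to 0$ is exact; applying $\Hom{C}{-}$ to it and running the long exact sequence in $\Ext{*}{C}{-}$ gives the $\Ext$-vanishing, and the five lemma applied to the ladder formed by $\gamma_{M_1},\gamma_{M_2},\gamma_{M_3}$ shows the third $\gamma$ is an isomorphism.

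The subtle case is $M_1,M_2\in\mathcal{A}_C(R)$ (the ``cokernel'' case), and the step showing $\Tor{1}{C}{M_3}=0$—equivalently, that $C\otimes_R M_1\to C\otimes_R M_2$ is injective—is the one I expect to be the main obstacle. Naturality of $\gamma$ identifies $\Hom{C}{-}$ applied to this map with the injection $M_1\to M_2$ via the isomorphisms $\gamma_{M_1},\gamma_{M_2}$, so $\Hom{C}{C\otimes_R M_1}\to\Hom{C}{C\otimes_R M_2}$ is injective, whence $\Hom{C}{K}=0$ for $K:=\ker(C\otimes_R M_1\to C\otimes_R M_2)$. One must then deduce $K=0$. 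Here is where the artinian hypothesis enters: $M_1$ is artinian and $C$ is finitely generated, so $K$ is artinian; dualizing and using that $R$ is complete, $\Hom{C}{K}\cong(C\otimes_R\md K)^\vee$ with $\md K$ finitely generated, and since $\supp C=\spec R$ the module $C\otimes_R\md K$ has support $\supp\md K$, forcing $\md K=0$ and hence $K=0$. (In the general noetherian setting one instead cites that $\mathcal{A}_C(R)$ is a resolving and coresolving subcategory closed under two-of-three.) Once $\Tor{1}{C}{M_3}=0$, the sequence $0\to C\otimes_R M_1\to C\otimes_R M_2\to C\otimes_R M_3\to 0$ is exact and the remaining conditions for $M_3$ follow exactly as in the extension case, finishing the argument.
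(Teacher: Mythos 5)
Your argument is correct, but it takes a genuinely different route from the paper's. The paper never passes to the Auslander class: it proves the two-of-three condition directly for the full class $\mathcal{G}_T^{full}(R)$ of derived $T$-reflexive modules (Theorem~\ref{prop110125}), using the long exact sequence in $\Ext{i}{-}{T}$ and a snake-lemma argument on the biduality maps, and then gets the artinian case by intersecting with the class of artinian modules. The one delicate step there --- showing $\Ext{1}{L_3}{T}=0$ when $L_1,L_2\in\mathcal{G}_T^{full}(R)$ --- is handled by applying $\Hom{-}{T}$ once more and invoking Lemma~\ref{lem110125}, which says that $\Hom{L}{T}=0$ forces $L=0$. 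Your proof meets the mirror-image difficulty at the mirror-image spot: in the cokernel case for the Auslander class you must show that $\ker(\Otimes{\md{T}}{M_1}\to\Otimes{\md{T}}{M_2})$ vanishes given that its $\Hom{\md{T}}{-}$ does, and you resolve this either by the support/Matlis-duality computation (which genuinely uses that $M_1$ is artinian) or by citing faithfulness of the semidualizing module $\md{T}$ as in \cite[Proposition~3.6]{holm:fear} --- the very fact the paper uses to prove Lemma~\ref{lem110125}. So the two proofs are essentially dual under the equivalences of Theorem~\ref{intthm110223}. What the paper's route buys is the stronger conclusion that $\mathcal{G}_T^{full}(R)$, with no artinian, noetherian, or Matlis-reflexive hypothesis, satisfies two-of-three; your route is shorter if one quotes the known two-of-three property of $\mathcal{A}_C(R)$ for semidualizing $C$, but it only reaches the subclasses for which the identification $\mathcal{G}_T=\mathcal{A}_{\md{T}}$ has been established (Corollary~\ref{cor110225c}), which is enough for the statement as given.
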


In Section~\ref{sec110223} we provide some definitions and background
material.   Section~\ref{sec110225} describes properties
related to quasidualizing modules,
and Section~\ref{sec110301} describes the relations
between the different classes of modules using Matlis duality
as well as Theorem~\ref{intthm110215}.

\section{Background material}\label{sec110223}

\begin{defn}\label{defn110301}
We say that an $R$-module $L$ is
\emph{Matlis reflexive} if the natural
bidualitiy map $\delta_L^E:L\rightarrow L^{\vee\vee}$,
given by $l\mapsto [\phi\mapsto \phi(l)]$. 
\end{defn}

\begin{fact}\label{fact110203}
Let $L$ be an $R$-module.  The natural biduality map
$\delta_L$ is injective; 
see~\cite[Theorem 18.6(i)]{matsumura:crt}.  If $L$ is Matlis reflexive,
then $\md{L}$ is Matlis reflexive.
\end{fact}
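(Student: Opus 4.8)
The plan is to prove the two assertions of Fact~\ref{fact110203} separately. The first assertion, that $\delta_L$ is injective, is pure citation: the map $\delta_L \colon L \to L^{\vee\vee}$ sends $l$ to evaluation at $l$, and injectivity is exactly the statement that for every nonzero $l \in L$ there is a homomorphism $\phi \colon L \to E$ with $\phi(l) \neq 0$; this is \cite[Theorem 18.6(i)]{matsumura:crt}, using that $E$ is an injective cogenerator. So nothing needs to be done there beyond pointing to the reference.

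For the second assertion, suppose $L$ is Matlis reflexive, so $\delta_L$ is an isomorphism. I want to show $\delta_{\md L} \colon \md L \to \md L{}^{\vee\vee}$ is an isomorphism. By the first part it is already injective, so only surjectivity is at issue. The key observation is the standard ``triple dual'' identity: applying $(-)^\vee$ to $\delta_L$ gives a map $(\delta_L)^\vee \colon \md L{}^{\vee\vee} \to \md L$, and one checks directly from the definitions that $(\delta_L)^\vee \circ \delta_{\md L} = \mathrm{id}_{\md L}$ (this is a formal computation: for $\phi \in \md L$ and $l \in L$, $((\delta_L)^\vee(\delta_{\md L}(\phi)))(l) = \delta_{\md L}(\phi)(\delta_L(l)) = \delta_L(l)(\phi) = \phi(l)$). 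Since $\delta_L$ is an isomorphism, $(\delta_L)^\vee$ is also an isomorphism, and then from $(\delta_L)^\vee \circ \delta_{\md L} = \mathrm{id}$ we get that $\delta_{\md L} = ((\delta_L)^\vee)^{-1}$, which is an isomorphism. Hence $\md L$ is Matlis reflexive.

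The only mild obstacle is getting the formal ``triple dual'' bookkeeping right — making sure the composite $(\delta_L)^\vee \circ \delta_{\md L}$ really is the identity rather than some other natural endomorphism — but this is the usual adjunction triangle identity for the $\md{(-)}$ functor and presents no real difficulty. An alternative, equally short route is to cite directly that a module is Matlis reflexive if and only if its Matlis dual is; this appears in the literature (e.g.\ work of Enochs on Matlis reflexive modules), and combined with the first sentence gives the statement immediately. Either way the argument is a few lines.
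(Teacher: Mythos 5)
Your proposal is correct: the paper states this as a Fact without proof (citing Matsumura only for the injectivity), and your triple-dual argument for the second assertion is the standard justification, with the key identity $(\delta_L)^\vee\circ\delta_{\md{L}}=\mathrm{id}_{\md{L}}$ computed correctly, from which $\delta_{\md{L}}=((\delta_L)^\vee)^{-1}$ is indeed an isomorphism. Nothing is missing.
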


\begin{fact}\label{fact110120b}
Assume $R$ is complete and let $L$ be an $R$-module.  If $L$ is artinian,
then $\md{L}$ is noetherian. If $L$ is noetherian, then $\md{L}$ is
artinian.  Since $R$ is complete, both artinian modules and noetherian
modules are Matlis reflexive;
see~\cite[Theorem 18.6(v)]{matsumura:crt}.
\end{fact}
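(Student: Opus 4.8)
The plan is to reduce everything to the structure of the injective hull $E=E_R(k)$ and to the single computation $\Hom{E}{E}\cong\comp R$, which collapses to $R$ under the completeness hypothesis. Two preliminary observations drive the argument. First, $(-)^\vee=\Hom{-}{E}$ is exact because $E$ is injective, so it carries injections to surjections and conversely. Second, $\md R=\Hom{R}{E}\cong E$, while $\mdd R=\Hom{E}{E}\cong\comp R=R$. The homothety $R\to\Hom{E}{E}$ realizing the last isomorphism is precisely the biduality map $\delta_R$, so $\delta_R$ is an isomorphism; this is the one place where completeness is genuinely used.

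For the two duality statements I would argue by (co)presentations. If $L$ is noetherian, I choose a finite free presentation $R^a\to R^b\to L\to 0$ and apply the left-exact functor $(-)^\vee$ to obtain $0\to\md L\to E^b\to E^a$. Since $E$ is artinian, so is $E^b$, and $\md L$, being a submodule of an artinian module, is artinian. If instead $L$ is artinian, then its socle $\soc(L)=(0:_L\m)$ is a finite-dimensional $k$-vector space, so the injective hull of $L$ is $E^n$ for some $n$; embedding $L\into E^n$ and applying the exact functor $(-)^\vee$ yields a surjection $(E^n)^\vee\onto\md L$. As $(E^n)^\vee\cong(\md E)^n\cong R^n$ is noetherian and quotients of noetherian modules are noetherian, $\md L$ is noetherian.

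For reflexivity I would use naturality of $\delta$ together with the five lemma, bootstrapping from the base cases $\delta_R$ and $\delta_E$. We already have $\delta_R$ an isomorphism; then $\delta_E=\delta_{\md R}$ is an isomorphism by the triangle identity $(\delta_M)^\vee\circ\delta_{\md M}=\id_{\md M}$, which forces $\delta_{\md M}$ to be an isomorphism whenever $\delta_M$ is. Hence $\delta$ is an isomorphism on every finite free module and on every finite direct sum of copies of $E$. For noetherian $L$, applying the exact functor $(-)^{\vee\vee}$ to a finite free presentation and comparing through the naturality squares of $\delta$ lets the five lemma promote the isomorphisms $\delta_{R^a},\delta_{R^b}$ to an isomorphism $\delta_L$. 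For artinian $L$, the socle computation gives a left-exact copresentation $0\to L\to E^n\to E^m$ (embed $L$ and its artinian cokernel into their injective hulls); applying $(-)^{\vee\vee}$ and the five lemma against $\delta_{E^n},\delta_{E^m}$ shows $\delta_L$ is an isomorphism.

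The main obstacle is concentrated entirely in the base case: proving that the canonical homothety $R\to\Hom{E}{E}$ coincides with the completion map $R\to\comp R$ and is therefore an isomorphism when $R$ is complete. This rests on the finer structure of $E$, namely that $E=\bigcup_n(0:_E\m^n)$ with each $(0:_E\m^n)$ of finite length and Matlis self-dual, so that an endomorphism of $E$ is determined by a compatible system of actions on the finite-length pieces, reproducing an element of $\comp R$. Once this identification is secured, exactness of $(-)^\vee$ and the five lemma make the remaining steps routine, and everything else is bookkeeping with (co)presentations.
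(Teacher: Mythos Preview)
Your argument is correct and self-contained. The paper, however, does not prove this Fact at all: it simply records the statement and cites \cite[Theorem~18.6(v)]{matsumura:crt}. So there is no ``paper's proof'' to compare against beyond that reference.

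What you have written is essentially the standard textbook proof that lies behind the citation: compute $\md E\cong\comp R$ to get the base cases, then push the result to arbitrary noetherian (resp.\ artinian) modules via finite free presentations (resp.\ finite $E$-copresentations) using exactness of $(-)^\vee$ and naturality of $\delta$ with a five-lemma/cokernel--kernel argument. The triangle identity $(\delta_M)^\vee\circ\delta_{\md M}=\id_{\md M}$ that you invoke is exactly the right tool to propagate reflexivity from $R$ to $E$. One minor remark: in both the artinian and noetherian reflexivity steps you really only need that $(-)^{\vee\vee}$ is exact and that $\delta$ is a natural transformation between exact functors, so the conclusion follows from the universal property of kernels/cokernels rather than the full five lemma; but this is a matter of phrasing, not substance.
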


\begin{lem}\label{lem110120}
Let $L$ and $L'$ be $R$-modules such that
$L$ is Matlis reflexive.  Then for all $i\geq0$ we have the isomorphisms
$$\Ext{i} {L'}{L}\cong\Ext{i}{\md{L}}{\md{L'}} \text{ and }
\Ext{i}{L'}{\md{L}}\cong\Ext{i}{L}{\md{L'}}.$$
\end{lem}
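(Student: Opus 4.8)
The plan is to reduce both isomorphisms to the Matlis duality adjunction $\Hom{A}{\md{B}}\cong\md{(\Otimes{A}{B})}$ together with the biduality isomorphism $L\cong\mddp{L}$ that holds because $L$ is Matlis reflexive. First I would establish the second isomorphism $\Ext{i}{L'}{\md{L}}\cong\Ext{i}{L}{\md{L'}}$, since it should come out more symmetrically. The idea is that for any $R$-module $N$ we have a natural isomorphism $\Hom{N}{\md{L}}=\Hom{N}{\Hom{L}{E}}\cong\Hom{\Otimes{N}{L}}{E}=\md{(\Otimes{N}{L})}$, which is symmetric in $N$ and $L$; hence $\Hom{L'}{\md{L}}\cong\Hom{L}{\md{L'}}$ naturally. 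To pass to $\Ext$, I would take a projective resolution $P_\bullet\to L'$ and observe that $\Hom{P_\bullet}{\md{L}}\cong\md{(\Otimes{P_\bullet}{L})}$; since $(-)^\vee=\Hom{-}{E}$ is exact ($E$ is injective), the cohomology of $\md{(\Otimes{P_\bullet}{L})}$ is $\md{(\Tor{i}{L'}{L})}$. So $\Ext{i}{L'}{\md{L}}\cong\md{(\Tor{i}{L'}{L})}$, and the right-hand side is visibly symmetric in $L'$ and $L$, giving $\Ext{i}{L'}{\md{L}}\cong\Ext{i}{L}{\md{L'}}$ with no reflexivity hypothesis needed at all.

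For the first isomorphism $\Ext{i}{L'}{L}\cong\Ext{i}{\md{L}}{\md{L'}}$, I would apply the formula just obtained with $L$ replaced by $\md{L}$: this yields $\Ext{i}{L'}{\mddp{L}}\cong\Ext{i}{\md{L}}{\md{L'}}$. Now I invoke the hypothesis that $L$ is Matlis reflexive, so the biduality map $\delta_L\colon L\to\mddp{L}$ is an isomorphism; functoriality of $\Ext{i}{L'}{-}$ then gives $\Ext{i}{L'}{L}\cong\Ext{i}{L'}{\mddp{L}}\cong\Ext{i}{\md{L}}{\md{L'}}$, as desired. (Alternatively one can run the dual argument with an injective resolution of $L$, using $\Hom{\md{L}}{I_\bullet}$ versus $\Hom{\md{I_\bullet}}{\mddp{L}}$, but the Tor-dualization route above is cleaner since it handles both statements uniformly.)

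The one point that needs a little care — and the main, though modest, obstacle — is the interchange of $\Hom{-}{E}$ with taking cohomology in the first argument of $\Hom$. Since $E$ is injective, $(-)^\vee$ is exact, so applying it to the complex $\Otimes{P_\bullet}{L}$ commutes with homology: $\HH^i(\md{(\Otimes{P_\bullet}{L})})\cong\md{(\HH_i(\Otimes{P_\bullet}{L}))}=\md{(\Tor{i}{L'}{L})}$. I should also check that the natural isomorphism $\Hom{P_n}{\md{L}}\cong\md{(\Otimes{P_n}{L})}$ is compatible with the differentials of the two complexes, which is a standard naturality verification for the tensor-hom adjunction with respect to $E$. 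With those compatibilities in place, both displayed isomorphisms follow, and the argument makes transparent the stronger intermediate fact $\Ext{i}{L'}{\md{L}}\cong\md{(\Tor{i}{L'}{L})}$, which may itself be worth recording for later use.
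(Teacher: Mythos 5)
Your argument is correct, and it rests on exactly the same two ingredients as the paper's proof --- Hom-tensor adjointness with respect to $E$ (together with exactness of $(-)^\vee$) and the biduality isomorphism $\delta_L\colon L\to\mddp{L}$ --- but you deploy them in the opposite order. The paper proves the first isomorphism directly, rewriting $L$ as $\hom{\md{L}}{E}$ via $\delta_L$ and then applying adjointness to get $\Ext{i}{L'}{L}\cong\Ext{i}{\md{L}}{\md{L'}}$, and afterwards obtains the second isomorphism by substituting $\md{L}$ for $L$ in the first (using that $\md{L}$ is again Matlis reflexive). You instead prove the second isomorphism directly, via $\Ext{i}{L'}{\md{L}}\cong\md{(\Tor{i}{L'}{L})}$ and the symmetry of Tor, and then deduce the first by substituting $\md{L}$ for $L$ and invoking reflexivity only at that point. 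Your route has two small advantages: it makes visible that the second isomorphism holds for arbitrary $L$, with no reflexivity hypothesis (the hypothesis is genuinely needed only for the first isomorphism), and it records the intermediate formula $\Ext{i}{L'}{\md{L}}\cong\md{(\Tor{i}{L'}{L})}$, which is precisely the duality the paper later invokes as \cite[Remark~1.9]{kubik:hamm1} in the proofs of Propositions~\ref{prop120120} and~\ref{prop110130}. The compatibility-with-differentials point you flag is indeed the only thing to verify and is the standard naturality of the adjunction; the paper elides the same verification by writing the adjointness isomorphisms directly at the level of Ext.
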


\begin{proof}
For the first isomorphism, since $L$ is Matlis reflexive,
by definition the map $$\Ext{i}{L'}{\delta_L}:\Ext{i}{L'}{L}\rightarrow
\Ext{i}{L'}{\hom{\md{L}}{E}}$$ is an isomorphism.
A manifestation of Hom-tensor adjointness yields the following isomorphisms
$$\Ext{i}{L'}{\hom{\md{L}}{E}}\xrightarrow{\cong}
\Ext{i}{\Otimes{L'}{\md{L}}}{E}\xrightarrow{\cong}
\Ext{i}{\md{L}}{\md{L'}}.$$
The composition of these maps provides us with the isomorphism
$\Ext{i} {L'}{L}\cong\Ext{i}{\md{L}}{\md{L'}}$.

For the second isomorphism, the fact that $L$ is Matlis reflexive explains
the second step in the following sequence
$\Ext{i}{L'}{\md{L}}\cong\Ext{i}{L^{\vee\vee}}{\md{L'}}\cong\Ext{i}{L}{\md{L'}}$.
The first step follows from the first isomorphism since $\md{L}$ is Matlis reflexive.
\end{proof}

\begin{fact}\label{fact110120a}
Assume $R$ is complete and let $A$ and $A'$ be artinian $R$-modules.
Then $\hom{A}{A'}$ is noetherian.
This can be deduced using~\cite[Theorem 2.11]{kubik:hamm1}.
\end{fact}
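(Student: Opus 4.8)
The plan is to reduce the statement to a finite presentation argument for the injective hull $E$, using the cited structure theorem for Hom-sets of artinian modules over a complete ring. First I would recall the key input from~\cite[Theorem 2.11]{kubik:hamm1}: over a complete local ring, an artinian module $A$ admits a copresentation by finite direct sums of copies of $E$, i.e.\ there is an exact sequence $0\to A\to E^{n}\to E^{m}$ for suitable nonnegative integers $n,m$. (Dually, this is just the statement that the noetherian module $\md{A}$ has a finite free presentation $R^{m}\to R^{n}\to \md{A}\to 0$, which exists since $R$ is noetherian and $\md{A}$ is finitely generated by Fact~\ref{fact110120b}; applying $\md{(-)}$ and using Matlis reflexivity of $A$ recovers the copresentation.)

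Next I would apply the contravariant functor $\hom{-}{A'}$ to the copresentation $0\to A\to E^{n}\to E^{m}$. Left-exactness of $\hom{-}{A'}$ produces an exact sequence
\[
0\longrightarrow \hom{\coker(E^{n}\to E^{m})}{A'}\longrightarrow \hom{E^{m}}{A'}\longrightarrow \hom{E^{n}}{A'}\longrightarrow \hom{A}{A'},
\]
so $\hom{A}{A'}$ receives a map whose image is the cokernel of $\hom{E^{m}}{A'}\to\hom{E^{n}}{A'}$; more precisely, breaking the four-term sequence at its middle shows $\hom{A}{A'}$ is an extension involving a submodule of $\hom{E^{n}}{A'}^{\,}$ — rather, it is cleaner to instead work with the presentation of $\md{A}$ directly. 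So the actual route I would take is: write $R^{m}\xrightarrow{\ \partial\ } R^{n}\to \md{A}\to 0$, dualize to get $0\to A\to (\md{R})^{\,n}$ — but $\md R=E$ since $R$ is complete — giving $0\to A\to E^{n}\xrightarrow{\ \md{\partial}\ } E^{m}$, and then identify $\hom{A}{A'}$. The cleanest identification uses Lemma~\ref{lem110120}: since $A$ and $A'$ are artinian over a complete ring, they are Matlis reflexive by Fact~\ref{fact110120b}, so $\hom{A}{A'}\cong\Ext{0}{A}{A'}\cong\Ext{0}{\md{A'}}{\md{A}}=\hom{\md{A'}}{\md{A}}$. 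Now $\md{A}$ and $\md{A'}$ are noetherian $R$-modules by Fact~\ref{fact110120b}, and the Hom-module between two finitely generated modules over a noetherian ring is itself finitely generated, i.e.\ noetherian. Therefore $\hom{A}{A'}$ is noetherian.

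The main obstacle — really the only nontrivial point — is establishing the isomorphism $\hom{A}{A'}\cong\hom{\md{A'}}{\md{A}}$ of $R$-modules, as opposed to a mere abstract group isomorphism; one must check the duality maps are $R$-linear and natural, which is exactly what Lemma~\ref{lem110120} (with $i=0$) supplies once we know $A$ is Matlis reflexive. After that, the finiteness of $\hom$ between noetherian modules over a noetherian ring is standard (it is a subquotient of $\hom{R^{n}}{\md A}\cong(\md A)^{n}$). The appeal to~\cite[Theorem 2.11]{kubik:hamm1} can alternatively be used to shortcut even the reflexivity step, which is presumably why the authors cite it; either way the conclusion is immediate once the problem is transported to the noetherian side via Matlis duality.
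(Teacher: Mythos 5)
Your final argument is correct, and it is essentially the expected deduction: the paper itself offers no proof of this Fact beyond the citation of \cite[Theorem 2.11]{kubik:hamm1}, and transporting the problem to the noetherian side via Matlis duality (using Lemma~\ref{lem110120} with $i=0$ and Fact~\ref{fact110120b}) is exactly the standard way to obtain it. Two small remarks. First, the opening half of your writeup, where you set up a copresentation $0\to A\to E^n\to E^m$ and try to extract a presentation of $\hom{A}{A'}$ from the contravariant functor $\hom{-}{A'}$, does not lead anywhere cleanly (left-exactness gives you control of a kernel, not of $\hom{A}{A'}$ itself, and you would be left estimating an $\ext^1$ term); you rightly abandon it, but it could simply be deleted. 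Second, there is an even shorter route that avoids Lemma~\ref{lem110120} entirely: embed $A'\into E^{m}$ and apply the covariant left-exact functor $\hom{A}{-}$ to get $\hom{A}{A'}\into\hom{A}{E^{m}}\cong(\md{A})^{m}$, which is noetherian by Fact~\ref{fact110120b} since $R$ is complete; a submodule of a noetherian module is noetherian. Either way the conclusion stands.
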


\begin{fact}\label{fact110518}
Let $L$ be an $R$-module.  Then $L$ is artinian over $R$ if
and only if it is artinian over $\comp R$.  See~\cite[Lemma~1.14]{kubik:hamm1}.
\end{fact}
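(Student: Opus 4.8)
The plan is to reduce the statement to two facts: an artinian module over either ring is $\m$-power torsion, and for $\m$-power torsion modules the lattice of $R$-submodules coincides with the lattice of $\comp R$-submodules. Once these are in hand, the descending chain condition transports freely across the identification of lattices.

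First I would record the torsion statement. Suppose $L$ is artinian over $R$ and let $x\in L$. Then $Rx\cong R/\ann_R(x)$ is a cyclic artinian module, so $R/\ann_R(x)$ is an artinian local ring; hence its maximal ideal is nilpotent and $\m^n x=0$ for some $n$. Thus $L=\tors{m}{L}$ is $\m$-power torsion. The same argument over $\comp R$ shows that an artinian $\comp R$-module is $\m\comp R$-power torsion; restricting scalars along $R\to\comp R$ and using $\m^n\subseteq\m^n\comp R$, such a module is again $\m$-power torsion as an $R$-module.

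Next I would prove the key lemma: if $L=\tors{m}{L}$, then $L$ carries a unique $\comp R$-structure extending its $R$-structure, and its $R$-submodules and $\comp R$-submodules coincide. The mechanism is the canonical isomorphism $R/\m^n\cong\comp R/\m^n\comp R$. Concretely, given $x$ with $\m^n x=0$ and $\widehat r\in\comp R$, choose $r\in R$ with $\widehat r-r\in\m^n\comp R$ and set $\widehat r\cdot x:=rx$; this is forced, hence unique, since $(\widehat r-r)x=0$ whenever $\widehat r-r\in\m^n\comp R$ and $\m^n x=0$, and one checks additivity and multiplicativity routinely. For the lattice statement, if $N\subseteq L$ is an $R$-submodule, $x\in N$, and $\widehat r\in\comp R$, then $\widehat r x=rx\in N$ for a suitable $r\in R$, so $N$ is a $\comp R$-submodule; the reverse inclusion is immediate along $R\to\comp R$.

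Finally I would assemble the equivalence. If $L$ is artinian over $R$, it is $\m$-power torsion, so by the lemma the two submodule lattices agree and the descending chain condition over $R$ transports to one over $\comp R$; thus $L$ is artinian over $\comp R$. Conversely, if $L$ is artinian over $\comp R$, it is $\m$-power torsion as an $R$-module by the first step, its given $\comp R$-structure coincides with the canonical one by uniqueness, the lattices agree, and the descending chain condition over $\comp R$ yields one over $R$. I expect the main obstacle to be the key lemma, namely verifying that the formula $\widehat r\cdot x=rx$ is well defined and respects the ring operations, and confirming that this canonical structure is the one implicit in the phrase ``artinian over $\comp R$''; the remainder is just transport of the chain condition across the identification of submodule lattices.
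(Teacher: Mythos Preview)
Your proof is correct. Note that the paper does not actually prove this statement; it is recorded as a Fact with a reference to \cite[Lemma~1.14]{kubik:hamm1}. The argument you supply---reducing to $\m$-power torsion, observing that $R/\m^n\cong\comp R/\m^n\comp R$ forces a unique $\comp R$-structure on any $\m$-torsion $R$-module, and then identifying the lattices of $R$-submodules and $\comp R$-submodules---is precisely the standard mechanism behind such a citation, so there is nothing substantive to compare. One minor remark: in the converse direction you invoke uniqueness of the $\comp R$-structure to justify that the given $\comp R$-action agrees with the canonical one before transporting the lattice statement; this is fine, but you could also argue directly that for any $\comp R$-module $L$ which is $\m$-torsion as an $R$-module, every $R$-submodule is automatically a $\comp R$-submodule (since $\m^n x=0$ implies $\m^n\comp R\, x=\comp R\,\m^n x=0$, whence $\widehat r x=rx$ for suitable $r\in R$), which bypasses the uniqueness step.
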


\begin{lem}\label{fact101215a}
Assume $R$ is artinian and let $L$ be an $R$-module.  Then the following
are equivalent
\begin{enumerate}[\rm(i)]
\item \label{fact101215a1} $L$ is noetherian over $R$;
\item \label{fact101215a2}  $L$ is finitely generated over $R$; and
\item  \label{fact101215a3} $L$ is artinian.
\end{enumerate}
\end{lem}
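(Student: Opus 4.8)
The plan is to establish the implications $(\ref{fact101215a1})\Leftrightarrow(\ref{fact101215a2})$, then $(\ref{fact101215a2})\Rightarrow(\ref{fact101215a3})$, and finally $(\ref{fact101215a3})\Rightarrow(\ref{fact101215a2})$; these together give the stated equivalence.

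For $(\ref{fact101215a1})\Leftrightarrow(\ref{fact101215a2})$: an artinian ring is in particular noetherian, and over a noetherian ring a module is noetherian if and only if it is finitely generated, which is standard. For $(\ref{fact101215a2})\Rightarrow(\ref{fact101215a3})$: if $L$ is finitely generated, choose a surjection $R^n\onto L$ for some $n$; the ring $R$ is artinian as a module over itself, hence so is the finite direct sum $R^n$, and a quotient of an artinian module is artinian, so $L$ is artinian.

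The substantive point is $(\ref{fact101215a3})\Rightarrow(\ref{fact101215a2})$. Here I would use that $R$ is artinian \emph{local} in the sense of this paper, so its maximal ideal is nilpotent, say $\m^t=0$. Consider the finite filtration $L\supseteq\m L\supseteq\m^2L\supseteq\cdots\supseteq\m^tL=0$. Each quotient $\m^iL/\m^{i+1}L$ is annihilated by $\m$, hence is a vector space over $k=R/\m$; being a subquotient of the artinian module $L$, it is artinian, and an artinian $k$-vector space is finite dimensional, since an infinite-dimensional one admits an infinite properly descending chain of subspaces. Thus each $\m^iL/\m^{i+1}L$ has finite length, and additivity of length along the filtration shows $L$ has finite length, so in particular $L$ is finitely generated. (Alternatively, since an artinian local ring is $\m$-adically complete, one could deduce this from Fact~\ref{fact110120b} applied twice together with the equivalence $(\ref{fact101215a1})\Leftrightarrow(\ref{fact101215a2})$, but the filtration argument is self-contained.)

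I do not anticipate a serious obstacle: the lemma is essentially bookkeeping, and the only point requiring a little care is that ``$R$ artinian'' is being used in the local setting fixed throughout the paper, so that $\m$ is nilpotent and the filtration argument goes through.
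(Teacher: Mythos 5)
Your proposal is correct, and for the only substantive implication it takes a genuinely different route from the paper. The equivalence (\ref{fact101215a1})$\iff$(\ref{fact101215a2}) and the implication (\ref{fact101215a2})$\implies$(\ref{fact101215a3}) are handled exactly as in the paper (standard facts over a noetherian ring, plus the surjection $R^n\onto L$). For (\ref{fact101215a3})$\implies$(\ref{fact101215a1}), the paper instead embeds the artinian module $L$ into $E^n$ via \cite[Theorem~3.4.3]{enochs:rha}, observes that $E\cong\md{R}$ is noetherian because $R$ is artinian (hence complete, so Fact~\ref{fact110120b} applies), and concludes that $L$ is noetherian as a submodule of the noetherian module $E^n$. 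You avoid both the embedding theorem and Matlis duality by exploiting that the standing local hypothesis makes $\m$ nilpotent, filtering $L$ by $\m^iL$ and noting that each layer is an artinian $k$-vector space, hence finite dimensional, so $L$ has finite length. Your argument is more elementary and self-contained (and your parenthetical alternative essentially recovers the paper's duality-based reasoning); the paper's argument is shorter given the machinery it has already set up and does not need nilpotency of $\m$. Both are valid under the paper's hypotheses.
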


\begin{proof}
The equivalence ~\eqref{fact101215a1}$\iff$~\eqref{fact101215a2} is
standard; see~\cite[Propositions 6.2 and 6.5]{atiyah:ica}.

For the implication ~\eqref{fact101215a2}$\implies$~\eqref{fact101215a3},
assume that $L$ is finitely generated over $R$.  Then there exists an $n\in\mathbb N$
and a surjective map
$R^n\xrightarrow{\phi}L$ so that we have
$L\cong\im(\phi)\cong R^n/\ker(\phi)$.  
Since $R$ is artinian,
$R^n$ is artinian.  Thus $L$ is artinian because the quotient of an artinain
module is artinian; see~\cite[Proposition 6.3]{atiyah:ica}.

For the implication~\eqref{fact101215a3}$\implies$~\eqref{fact101215a1},
assume that $L$ is artinian.  Then there exists an $n\in\mathbb N$ such
that $L\into E^n$; see~\cite[Theorem~3.4.3]{enochs:rha}.
  Since $R$ is artinian,
   we have $\md{R}\cong E$
is noetherian over $\comp R$ by Fact~\ref{fact110120b}, where the isomorphism 
follows from~\cite[Theorem 18.6 (iv)]{matsumura:crt}.  Hence we have that $E^n$ is noetherian over $\comp R=R$ since $R$ is artinian.   Since any submodule
of a noetherian module is noetherian, we conclude that $L$ is noetherian over $R$;
see~\cite[Proposition 6.3]{atiyah:ica}.
\end{proof}

%

\begin{lem}\label{lem110207a}
Assume $R$ is complete and let $A$ be an artinian $R$-module.  Then there
exists an injective resolution $I$ of $A$ such that  for each $i\geq 0$
we have $I_i\cong E^{b_i}$
for some  $b_i\in\mathbb{N}$.
Furthermore, $\md{I}$ is a free resolution of $\md{A}$.
\end{lem}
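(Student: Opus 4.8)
The plan is to build the injective resolution of $A$ in stages, using Matlis duality to transfer a free resolution of $\md A$ into an injective resolution of $A$, and then verifying that the resulting complex has the stated shape. By Fact~\ref{fact110120b}, since $R$ is complete and $A$ is artinian, the module $\md A$ is noetherian, hence finitely generated, so it admits a free resolution $F$ with each $F_i \cong R^{b_i}$ for some $b_i \in \mathbb{N}$ (here I would take a minimal free resolution, or at least one with finitely generated free terms). The dual complex $\md F$ then has terms $\md{(R^{b_i})} \cong (\md R)^{b_i} \cong E^{b_i}$, using that $\md R = \hom R E \cong E$ and that Matlis duality commutes with finite direct sums.

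The key point is that $\md{(-)}$ is exact (as $E$ is injective) and that it sends a free resolution of a Matlis reflexive module to an injective resolution of the dual. Concretely, applying $\md{(-)}$ to the exact complex $F \xrightarrow{\simeq} \md A$ (augmented) yields an exact complex $\mdd A \leftarrow \md F$, and since $A$ is artinian and $R$ is complete, $A$ is Matlis reflexive by Fact~\ref{fact110120b}, so $\mdd A \cong A$. Thus $\md F$ is an augmented exact complex over $A$ whose terms $E^{b_i}$ are injective (each $E^{b_i}$ is a finite direct sum of injectives, hence injective), i.e. $\md F$ is an injective resolution of $A$. Setting $I := \md F$ gives the desired resolution with $I_i \cong E^{b_i}$.

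For the final sentence, I would observe that the construction is essentially reversible: starting from this particular $I = \md F$, we have $\md I = \mdd F$. Since each $F_i \cong R^{b_i}$ is finitely generated and $R$ is complete (so $R^{b_i}$ is Matlis reflexive by Fact~\ref{fact110120b}), the biduality map $F \to \mdd F$ is an isomorphism of complexes; combined with the exactness of $\md{(-)}$ applied to $I \xrightarrow{\simeq} A$ (which computes $\md A$ on homology in degree zero and vanishes elsewhere), this shows $\md I \cong F$ is a free resolution of $\md A$. The one subtlety to be careful about is indexing/variance: $I$ is a coresolution (cochain complex) of $A$ while $F$ is a resolution (chain complex) of $\md A$, so I must check that dualizing correctly swaps these roles and that the augmentation maps $A \to I_0$ and $F_0 \to \md A$ correspond under the duality. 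This bookkeeping, rather than any deep point, is the main thing to get right; the injectivity of finite sums of copies of $E$ and the reflexivity inputs are all quoted from the facts above.
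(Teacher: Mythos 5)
Your argument is correct, but it runs in the opposite direction from the paper's. The paper constructs the injective resolution of $A$ directly, using the structure theory of artinian modules: $A$ embeds in some $E^{b_0}$ by~\cite[Theorem~3.4.3]{enochs:rha}, the cokernel $E^{b_0}/A$ is again artinian and embeds in some $E^{b_1}$, and so on recursively; the ``furthermore'' clause is then obtained by applying $\md{(-)}$ and computing $\md{(E^{b_i})}\cong\Hom EE^{b_i}\cong\comp R^{b_i}\cong R^{b_i}$. You instead start from a finite free resolution $F$ of the noetherian module $\md A$ and dualize once to produce $I=\md F$, then dualize again (using reflexivity of the finitely generated terms $R^{b_i}$ and of $A$ itself) to recover the free resolution. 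Both are valid. Your route makes the ``furthermore'' statement essentially automatic via biduality, and it lets you import minimality of $F$ if desired; its cost is that it leans on completeness (via Fact~\ref{fact110120b}) already for the existence of the resolution, whereas the paper's recursive construction of $I$ uses only that $A$ is artinian and would go through over any local noetherian ring, with completeness entering only in the identification $\Hom EE\cong\comp R\cong R$. Your flagged bookkeeping point (that dualizing swaps the chain/cochain roles and matches the augmentations) is the right thing to worry about and is handled correctly by the exactness of $\md{(-)}$ together with the naturality of the biduality map, so there is no gap.
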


\begin{proof}
Since $A$ is artinian, we have the map
$A\into E^{b_0}$ for some $b_0\geq 1$; see~\cite[Theorem~3.4.3]{enochs:rha}.
Because the finite direct sum of artinian modules is artinian,
$E^{b_0}$ is artinian and we have $E^{b_0}/A\into E^{b_1}$
for some $b_1\geq 0$.  Recursively we can construct
an injective resolution of $A$ such that for each $i\geq 0$
we have $I_i\cong E^{b_i}$ for some $b_i\in\mathbb{N}$.

Next we show that $\md{I}$ is a free
resolution of $\md{A}$.
The fact that  $I_i\cong E^{b_i}$ explains the first step
in the following sequence
$$I_i^{\vee}=\hom{I_i}{E}\cong\hom{E^{b_i}}{E}
\cong\hom{E}{E}^{b_i}\cong \comp R^{b_i}\cong R^{b_i}.
$$
The second step is standard.  The third step
is from~\cite[Theorem~18.6(iv)]{matsumura:crt},
and the last step follows from the assumption that $R$ is complete.
The desired conclusion follows from the fact that $(-)^\vee$ is exact.
\end{proof}

\begin{defn}\label{defn110125}
Let $L$, $L'$ and $L''$ be $R$-modules.
The \emph{Hom-evaluation} morphism
$$\theta_{LL'L''}:\Otimes{L}{\hom {L'}{L''}}\rightarrow\hom{\hom{L}{L'}}{L"}$$
is given by $a\otimes\phi\mapsto[\beta\mapsto
\phi(\beta(a))]$.
\end{defn}

\begin{fact}\label{fact110125}
The Hom-evaluation morphism $\theta_{LL'L''}$ is an isomorphism if the modules satisfy one
of the following conditions:
\begin{enumerate}[\rm(a)]
\item $L$ is finitely generated and $L''$ is injective; or
\item $L$ is finitely generated and projective.
\end{enumerate}
See~\cite[Lemma~1.6]{ishikawa:oimafm} and~\cite[Lemma~3.55]{rotman:iha2}.
\end{fact}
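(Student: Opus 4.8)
The plan is to prove both cases by reducing to the base case $L=R$, exploiting that $\theta_{LL'L''}$ from Definition~\ref{defn110125} is natural in the first variable. First I would verify the base case: when $L=R$ there are canonical identifications $\Otimes{R}{\Hom{L'}{L''}}\cong\Hom{L'}{L''}$ and $\Hom{\Hom{R}{L'}}{L''}\cong\Hom{L'}{L''}$, and a direct inspection of the formula $a\otimes\phi\mapsto[\beta\mapsto\phi(\beta(a))]$ shows that $\theta_{RL'L''}$ becomes the identity map. Both functors $F(-)=\Otimes{-}{\Hom{L'}{L''}}$ and $G(-)=\Hom{\Hom{-}{L'}}{L''}$ are additive in the first slot, so naturality of $\theta$ with respect to the coordinate inclusions and projections of $R^n$ identifies $\theta_{R^nL'L''}$ with the $n$-fold direct sum of $\theta_{RL'L''}$ under the isomorphisms $F(R^n)\cong\Hom{L'}{L''}^n\cong G(R^n)$. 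Hence $\theta_{R^nL'L''}$ is an isomorphism for every $n$.

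For case (b), where $L$ is finitely generated and projective, I would write $L$ as a direct summand of a finite free module, say $R^n\cong L\oplus N$. Naturality of $\theta$ with respect to the splitting inclusion and retraction expresses $\theta_{R^nL'L''}$ as the direct sum $\theta_{LL'L''}\oplus\theta_{NL'L''}$, and a direct sum of maps is an isomorphism if and only if each summand is; since $\theta_{R^nL'L''}$ is an isomorphism, so is $\theta_{LL'L''}$. Finite generation is essential here, since $G$ does not commute with infinite direct sums in the first variable.

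For case (a), where $L$ is finitely generated and $L''$ is injective, I would choose a finite free presentation $R^m\to R^n\to L\to 0$ and form the commutative ladder with top row $F(R^m)\to F(R^n)\to F(L)\to 0$ and bottom row $G(R^m)\to G(R^n)\to G(L)\to 0$, with vertical maps $\theta_{R^mL'L''}$, $\theta_{R^nL'L''}$, $\theta_{LL'L''}$. The top row is exact because $F=\Otimes{-}{\Hom{L'}{L''}}$ is right exact, and by the previous steps the two left vertical maps are isomorphisms. Once the bottom row is known to be exact, $F(L)$ and $G(L)$ are the cokernels of the respective left-hand maps, so the isomorphism of those left squares induces an isomorphism on cokernels, which is exactly $\theta_{LL'L''}$.

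The main obstacle is therefore the exactness of the bottom row, and this is precisely where injectivity of $L''$ is used. Applying the left-exact functor $\Hom{-}{L'}$ to the presentation yields an exact sequence $0\to\Hom{L}{L'}\to\Hom{R^n}{L'}\to\Hom{R^m}{L'}$; letting $T=\im(\Hom{R^n}{L'}\to\Hom{R^m}{L'})$ gives a short exact sequence $0\to\Hom{L}{L'}\to\Hom{R^n}{L'}\to T\to 0$ together with an inclusion $T\into\Hom{R^m}{L'}$. Since $L''$ is injective, $\Hom{-}{L''}$ is exact, so it carries this data to a short exact sequence $0\to\Hom{T}{L''}\to G(R^n)\to G(L)\to 0$ and shows that $G(R^m)\to G(R^n)$ has image $\Hom{T}{L''}$, which coincides with $\ker(G(R^n)\to G(L))$. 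This establishes exactness of the bottom row at $G(R^n)$ and surjectivity at $G(L)$, completing the cokernel argument and hence the proof.
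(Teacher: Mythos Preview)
Your argument is correct and is the standard proof of this fact: verify $\theta_{RL'L''}$ is the identity, pass to finite free modules by additivity and naturality, then handle case~(b) by a direct-summand argument and case~(a) by a five-lemma/cokernel argument using a finite free presentation together with exactness of $\Hom{-}{L''}$ when $L''$ is injective. All steps are sound.

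There is nothing to compare against in the paper itself: the statement is recorded as a Fact and is not proved there, only cited to Ishikawa~\cite[Lemma~1.6]{ishikawa:oimafm} and Rotman~\cite[Lemma~3.55]{rotman:iha2}. Your proof is essentially the argument one finds in those references.
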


%

\begin{defn}\label{defn101215d}
An $R$-module $C$ is \emph{semidualizing} if it satisfies the following
\begin{enumerate}[\rm(i)]
\item \label{defn101215d1} $C$ is finitely generated;
\item \label{defn101215d2} the homothety morphism $\chi_C^R:R\rightarrow\hom CC$, defined
by $r\mapsto[c\mapsto rc]$, is an isomorphism; and
\item \label{defn101215d3}one has  $\Ext{i}{C}{C}=0$ for all $i>0$.
\end{enumerate}
\end{defn}

\begin{disc}\label{rmk111220}
Let $\mathfrak{S}_0(R)$ denote the set of isomorphism classes of semidualizing 
$R$-modules.
\end{disc}

\begin{ex}\label{ex110301}
The ring $R$ is always semidualizing.
\end{ex}

\begin{defn}\label{defn101215j}
An $R$-module $T$ is \emph{quasidualizing} if it satisfies the following
\begin{enumerate}[\rm(i)]
\item \label{defn101215j1} $T$ is artinian;
\item \label{defn101215j2} the homothety morphism $\chi_T^{\comp R}:\comp R\rightarrow\hom TT$, defined by $r\mapsto[t\mapsto rt]$,  is an isomorphism; and
\item \label{defn101215j3}one has   $\Ext{i}{T}{T}=0$ for all $i>0$.
\end{enumerate}
\end{defn}

\begin{disc}\label{rmk120111}
The homothety morphism $\chi_T^{\comp R}$ is well defined since $T$ is
artinian implying by Fact~\ref{fact110518}
 that $T$ is an $\comp R$-module.
\end{disc}

\begin{disc}\label{rmk101215e} 
Let $\mathfrak{Q}_0(R)$ denote the set of isomorphism classes
of quasidualizing modules.
\end{disc}

\begin{ex}\label{ex110301a}
The injective hull of the residue field $E$ is always quasidualizing.
See~\cite[Theorem~3.4.1]{enochs:rha} and~\cite[Theorem~18.6(iv)]{matsumura:crt}
for conditions (i) and (ii)
of Definition~\ref{defn101215j}.  Since $E$ is injective by definition,
we have $\Ext{i}{E}{E}=0$ for all $i>0$ satisfying the last condition.
\end{ex}

\begin{defn}\label{defn110207}
Let $M$  be an $R$-module.  Then an $R$-module $L$ is 
\emph{derived $M$-reflexive} if
\begin{enumerate}[\rm(i)]
\item \label{defn110207a} the natural biduality map $\delta_L^M: L\rightarrow\hom{\hom LM}{M}$
defined by $l\mapsto[\phi\mapsto\phi(l)]$ is an isomorphism; and
\item \label{defn110207b}one has $\Ext{i}{L}{M}=0=\Ext{i}{\hom LM}{M}$ for all $i>0$.
\end{enumerate}
We write $\mathcal{G}_M^{full}(R)$ to denote the class of all 
derived $M$-reflexive $R$-modules, $\mathcal{G}_M^{\text{mr}}(R)$
to denote the class of all Matlis reflexive derived $M$-reflexive $R$-modules,
 $\mathcal{G}_M^{\text{artin}}(R)$
to denote the class of all artinian derived $M$-reflexive $R$-modules, and
 $\mathcal{G}_M^{\text{noeth}}(R)$
to denote the class of all noetherian derived $M$-reflexive $R$-modules.
\end{defn}

\begin{disc}\label{rmk110207}
When $M=C$ is a semidualizing $R$-module, the class
$\mathcal{G}_M^{\text{noeth}}(R)$ is the class of \emph{totally $C$-reflexive}
$R$-modules, sometimes denoted $\mathcal{G}_C(R)$.
\end{disc}

\begin{defn}\label{defn110113}
Let $L$ and $L'$ be $R$-modules.  We say that $L$ is in the
\emph{Bass class}
$\mathcal B_{L'}(R)$ with respect to $L'$ if it satisfies the following:
\begin{enumerate}[\rm(i)]
\item the natural evaluation homomorphism
 $\xi_L^{L'}: \Otimes{\hom{L'}{L}}{L'}\rightarrow L$, defined by
$\phi\otimes l\mapsto\phi(l)$, is an isomorphism; and
\item one has $\Ext{i}{L'}{L}=0=\Tor{i}{L'}{\hom{L'}{L}}$ for all $i> 0$.
\end{enumerate}
We write $\mathcal{B}_{L'}^{\text{mr}}(R)$ to denote the class
of all Matlis reflexive $R$-modules in the Bass class with respect to $L'$.
We write $\mathcal{B}_{L'}^{\text{artin}}(R)$ to denote the class
of all artinian $R$-modules in the Bass class with respect to $L'$, and
$\mathcal{B}_{L'}^{\text{noeth}}(R)$ to denote the class
of all noetherian $R$-modules in the Bass class with respect to $L'$.
\end{defn}

\begin{defn}\label{defn110113a}
Let $L$ and $L'$ be $R$-modules.  We say that $L$ is in the
\emph{Auslander class} $\mathcal{A}_{L'}(R)$ with respect to $L'$
 if it satisfies the following
\begin{enumerate}[\rm(i)]
\item the natural homomorphism 
$\gamma_L^{L'}:L\rightarrow\hom{L'}{\Otimes{L'}{L}}$, which is defined by
$l\mapsto [l'\mapsto l'\otimes l]$, is an isomorphism; and
\item one has $\Tor{i}{L'}{L}=0=\Ext{i}{L'}{\Otimes{L'}{L}}$ for all $i> 0$.
\end{enumerate}
We write $\mathcal{A}_{L'}^{\text{mr}}(R)$ to denote the class
of all Matlis reflexive $R$-modules in the Auslander class with respect to $L'$.
We write $\mathcal{A}_{L'}^{\text{artin}}(R)$ to denote the class
of all artinian $R$-modules in the Auslander class with respect to $L'$, and
 $\mathcal{A}_{L'}^{\text{noeth}}(R)$ to denote the class
of all noetherian $R$-modules in the Auslander class with respect to $L'$.
\end{defn}

\section{Quasidualizing Modules}\label{sec110225}

We begin with a few preliminary results pertaining to quasidualizing
modules.

\begin{prop}\label{prop110127}
Let $T$ be an $R$-module.  Then $T$ is a quasidualizing $R$-module
if and only if $T$ is a quasidualizing $\comp R$-module.
\end{prop}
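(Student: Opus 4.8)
The plan is to verify the three defining conditions of Definition~\ref{defn101215j} symmetrically, exploiting that the notions ``artinian'' and the relevant $\Ext$- and $\Hom$-modules are insensitive to the change of rings $R\to\comp R$. The key background inputs are Fact~\ref{fact110518}, which gives that an $R$-module $L$ is artinian over $R$ if and only if it is artinian over $\comp R$, together with the standard fact that $\comp R$ is flat over $R$ and that $\m$-adic completion is faithfully exact on artinian modules (indeed an artinian $R$-module already carries a natural $\comp R$-structure, and the two module structures agree). Throughout, write $(-)^\dagger$-free: for an artinian $R$-module $T$ I will use that $\Hom_R(T,T)=\Hom_{\comp R}(T,T)$ and $\Ext^i_R(T,T)=\Ext^i_{\comp R}(T,T)$ for all $i\geq 0$.

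First I would dispatch condition~\eqref{defn101215j1}: $T$ is artinian over $R$ iff $T$ is artinian over $\comp R$ by Fact~\ref{fact110518}, so this condition transfers verbatim in both directions. Next, for condition~\eqref{defn101215j2}, note that since $T$ is artinian it is an $\comp R$-module (Remark~\ref{rmk120111}), and the $\comp R$-module structure is the unique one extending the $R$-action; hence $\Hom_R(T,T)$ and $\Hom_{\comp R}(T,T)$ are literally the same abelian group with the same $\comp R$-action, and under this identification the homothety map $\chi_T^{\comp R}\colon\comp R\to\Hom_R(T,T)$ for the $R$-module $T$ coincides with the homothety map for the $\comp R$-module $T$. (One uses here that $\comp{\comp R}=\comp R$, so the ``$\comp R$'' appearing in Definition~\ref{defn101215j} is the same whether we regard $T$ as an $R$-module or an $\comp R$-module.) Thus $\chi_T^{\comp R}$ is an isomorphism of $R$-modules iff it is an isomorphism of $\comp R$-modules. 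Finally, for condition~\eqref{defn101215j3}, I would invoke the change-of-rings isomorphism $\Ext^i_{\comp R}(T,T)\cong\Ext^i_R(T,T)$ valid for $i\geq 0$: take a projective (indeed, by Lemma~\ref{lem110207a}, free) resolution of $T$ over $\comp R$; since $\comp R$ is flat over $R$ this can be compared with an $R$-resolution after base change, and because $T$ is already an $\comp R$-module $\Hom_R(-,T)$ applied to an $R$-projective resolution computes the same cohomology as $\Hom_{\comp R}(-,T)$ applied to an $\comp R$-projective resolution. Hence the vanishing $\Ext^{>0}_R(T,T)=0$ holds iff $\Ext^{>0}_{\comp R}(T,T)=0$.

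Assembling these three equivalences gives the biconditional: $T$ is quasidualizing over $R$ iff all three conditions hold over $R$ iff all three hold over $\comp R$ iff $T$ is quasidualizing over $\comp R$. The main obstacle, and the point that deserves care in writing the proof, is condition~\eqref{defn101215j3}: one must justify cleanly that $\Ext$ computed over $R$ agrees with $\Ext$ computed over $\comp R$ for the pair $(T,T)$. The cleanest route is the flat base change / completion argument sketched above, using that $T$ and $\comp R\otimes_R T\cong T$ are identified and that flatness of $\comp R$ over $R$ makes $\comp R\otimes_R(-)$ exact on a chosen $R$-projective resolution of $T$; alternatively one can cite the standard independence-of-base result for $\Ext$ when one module is already a module over the larger ring. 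I would also remark that condition~\eqref{defn101215j2} is automatic in the sense that both sides are manifestly the same object once one records $\comp{\comp R}=\comp R$, so the only genuine content lies in~\eqref{defn101215j1} (handled by Fact~\ref{fact110518}) and~\eqref{defn101215j3}.
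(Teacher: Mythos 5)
Your decomposition into the three defining conditions matches the paper's, and conditions (i) and (ii) are handled essentially as in the paper: Fact~\ref{fact110518} for the artinian condition, and the identification $\hom{T}{T}=\hom[\comp R]{T}{T}$ together with $\comp{\comp R}\cong\comp R$ for the homothety map (the paper justifies the equality of the two Hom modules by noting that $T$ is $\m$-torsion and citing \cite[Lemma~1.5(a)]{kubik:hamm1}, which is the precise form of your ``unique extension of the $R$-action'' remark). Where you genuinely diverge is condition (iii). The paper works on the injective side: it takes an injective resolution $I$ of $T$ with each $I_i\cong E^{b_i}$ via Lemma~\ref{lem110207a}, observes that each $I_i$ is artinian hence $\m$-torsion, so the very same complex is an injective resolution of $T$ over $\comp R$ with $\hom{T}{I_i}=\hom[\comp R]{T}{I_i}$ termwise; one literal complex then computes both Ext modules. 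You instead propose projective resolutions plus flat base change. That route does work, but two points in your sketch need repair. First, the parenthetical appeal to Lemma~\ref{lem110207a} for a \emph{free} resolution of $T$ is a miscitation: that lemma produces a free resolution of $\md{T}$, not of $T$ (an artinian module over a non-artinian ring is not even finitely generated). Second, the base change isomorphism you want is $\Ext[\comp R]{i}{\Otimes{\comp R}{T}}{T}\cong\Ext{i}{T}{T}$, obtained by applying $\Otimes{\comp R}{-}$ to an $R$-projective resolution of $T$ and using Hom-tensor adjointness; the argument therefore hinges on the natural map $\Otimes{\comp R}{T}\to T$ being an isomorphism. Your stated reason --- that $T$ is already a $\comp R$-module --- is not sufficient ($\comp R$ is a $\comp R$-module, yet $\Otimes{\comp R}{\comp R}\not\cong\comp R$ in general); the correct reason is that $T$ is $\m$-torsion, so $\Otimes{\comp R}{T}\cong\varinjlim_n\bigl(\Otimes{\comp R}{(0:_T\m^n)}\bigr)\cong\varinjlim_n(0:_T\m^n)=T$ because $\comp R/\m^n\comp R\cong R/\m^n$. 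With those two fixes your argument is complete; the paper's shared-injective-resolution argument sidesteps both issues and is the cleaner route here.
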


\begin{proof}
We need to check the equivalence of three conditions.  For the first condition,
$T$ is an artinian $R$-module if and only if $T$ is an artinian
$\comp R$-module by Fact~\ref{fact110518}.  For the rest
of the proof we assume without loss of generality that $T$ is artinian.

For the second condition, we have the equality $\hom TT=\hom[\comp R] TT$
from the fact that 
$T$ is $\m$-torsion and~\cite[Lemma 1.5(a)]{kubik:hamm1}.
  This explains the equality
in the following commutative diagram.
$$\xymatrix{
\comp R\ar[r]^-{\chi_T^{\comp R}}\ar[d]_-{\cong}
&\hom TT\ar[d]^-{=}\\
\comp{\comp R}\ar[r]^-{\chi_T^{\comp{\comp R}}}
&\hom[\comp R] TT
}$$
Since $\comp R\cong\comp{\comp R}$, we have
$\chi_T^{\comp R}$ is an isomorphism if and only if 
$\chi_T^{\comp {\comp R}}$ is an isomorphism.

For the last condition, 
Lemma~\ref{lem110207a} implies
that there exists  an injective resolution $I$ of $T$ such that
for each $i\geq 0$ we have $I_i\cong E^{b_i}$ for some $b_i\in\mathbb{N}$.
 For all $i\geq 0$, the modules $T$ and $I_i$ are artinian and hence
 $\m$-torsion.
%
By~\cite[Lemma 1.5(a)]{kubik:hamm1}, we have the equality
$\hom[\comp R]{T}{I_i}=\hom{T}{I_i}$ and $I$ is an injective
resolution of $T$ over $\comp {R}$.  This explains
the first and second steps in the next display:
$$\Ext[\comp R]{i}{T}{T}\cong\HH_{-i}(\hom[\comp R]{T}{I_i})\cong
\HH_{-i}(\hom{T}{I_i})\cong\Ext{i}{T}{T}.$$
The third step is by definition.
Thus we have $\Ext[\comp R]{i}{T}{T}=0$ for all $i>0$ if and only if
$\Ext{i}{T}{T}=0$ for all $i>0$.
\end{proof}

\begin{prop}\label{lem110204}
The following conditions are equivalent
\begin{enumerate}[\rm(i)]
\item \label{lem110204a}$E$ is a semidualizing $R$-module;
\item \label{lem110204d}$R$ is a quasidualizing $R$-module;
\item \label{lem110204f}$E$ is a noetherian $R$-module;
\item \label{lem110204b}$R$ is an artinian ring;
\item \label{lem110204c}$\mathfrak{Q}_0(R)=\mathfrak{S}_0(R)$; and
\item \label{lem110204e}$\mathfrak{Q}_0(R)\cap\mathfrak{S}_0(R)\neq 0$.
\end{enumerate}
\end{prop}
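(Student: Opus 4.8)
The plan is to show that every one of the six conditions is equivalent to condition~\eqref{lem110204b}, that $R$ is an artinian ring, which I treat as the hub. Two facts will make the bookkeeping routine: if $R$ is artinian then $\m$ is nilpotent, so the completion map $R\to\comp R$ is an isomorphism; and by Lemma~\ref{fact101215a}, over an artinian ring the classes of artinian, noetherian, and finitely generated modules coincide. I will also use that $\Ext{i}{E}{E}=0$ for $i>0$ since $E$ is injective, that $\Ext{i}{R}{R}=0$ for $i>0$ since $R$ is free, and that the homothety map $\comp R\to\Hom{E}{E}$ is an isomorphism by~\cite[Theorem~18.6(iv)]{matsumura:crt} (this is precisely the map in Definition~\ref{defn101215j}(ii) for $T=E$).

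First I would dispatch the implications out of~\eqref{lem110204b}. Assume $R$ is artinian. Then $E$ is artinian by Matlis theory, hence noetherian by Lemma~\ref{fact101215a}: this is~\eqref{lem110204f}. Combined with $\Ext{i}{E}{E}=0$ for $i>0$ and the homothety isomorphism $R\to\Hom{E}{E}$ (valid since $\comp R=R$), this shows $E$ is semidualizing, which is~\eqref{lem110204a}. Likewise $R$ is artinian as a module over itself, $\Ext{i}{R}{R}=0$ for $i>0$, and the homothety $\comp R=R\to\Hom{R}{R}$ is an isomorphism (under $\Hom{R}{R}\cong R$ it becomes $\operatorname{id}_R$), so $R$ is quasidualizing, which is~\eqref{lem110204d}. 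Since $\comp R=R$ and ``artinian''$=$``finitely generated'' for $R$-modules, Definitions~\ref{defn101215d} and~\ref{defn101215j} impose exactly the same three requirements on a module, so the class of semidualizing modules equals the class of quasidualizing modules; hence $\mathfrak{Q}_0(R)=\mathfrak{S}_0(R)$, which is~\eqref{lem110204c}. Finally~\eqref{lem110204c} implies~\eqref{lem110204e} because $R$ is semidualizing by Example~\ref{ex110301}, so $[R]$ lies in $\mathfrak{S}_0(R)=\mathfrak{Q}_0(R)$ and the intersection is nonempty.

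Next I would show each condition forces~\eqref{lem110204b}. If~\eqref{lem110204a} or~\eqref{lem110204f} holds, then $E$ is noetherian (finite generation over the noetherian ring $R$ suffices); being also artinian, $E$ has finite length, so $\Hom{E}{E}$ has finite length over $R$ (picking generators $e_1,\dots,e_n$ of $E$ embeds it into $E^n$). Since $\Hom{E}{E}\cong\comp R$, the $R$-module $\comp R$, hence its $R$-submodule $R$ (the completion map is injective by Krull intersection), has finite length, so $R$ is artinian. If~\eqref{lem110204d} holds, then by Definition~\ref{defn101215j}(i) the module $R$ is artinian over itself, i.e.\ $R$ is an artinian ring. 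If~\eqref{lem110204e} holds, choose $C$ that is simultaneously semidualizing and quasidualizing; then $C$ is finitely generated by Definition~\ref{defn101215d}(i) and artinian by Definition~\ref{defn101215j}(i), hence of finite length, so $R\cong\Hom{C}{C}$ has finite length (again $\Hom{C}{C}$ embeds into a finite power of $C$) and $R$ is artinian. As~\eqref{lem110204c} trivially implies~\eqref{lem110204e}, this closes every loop.

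The only step carrying real content is ``$E$ noetherian $\Rightarrow$ $R$ artinian'', and even there the point is just that a finitely generated module which is also artinian has finite length, combined with the Matlis identification $\Hom{E}{E}\cong\comp R$; I expect this to be the main, and rather mild, obstacle. Everything else is definition-chasing once one records that an artinian local ring is complete and that Lemma~\ref{fact101215a} collapses the finiteness hypotheses over such a ring.
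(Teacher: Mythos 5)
Your argument is correct, and all six implications you use do go through; the difference from the paper lies in the organization and in two of the sub-arguments. The paper does not use a hub: it proves \eqref{lem110204f}$\iff$\eqref{lem110204b}, \eqref{lem110204a}$\implies$\eqref{lem110204f}, \eqref{lem110204b}$\implies$\eqref{lem110204a}, \eqref{lem110204b}$\implies$\eqref{lem110204c}$\implies$\eqref{lem110204d}$\implies$\eqref{lem110204b}, and \eqref{lem110204d}$\iff$\eqref{lem110204e}, whereas you route everything through \eqref{lem110204b}. The two genuinely different steps are these. For ``$E$ noetherian $\implies$ $R$ artinian'' the paper cites the length-preserving property of Matlis duality (\cite[Theorem 18.6(ii)]{matsumura:crt}) to get $\len_R(R)=\len_R(E)$; you instead note that a module that is both noetherian and artinian has finite length, embed $\Hom EE$ into a finite power of $E$, and use $\Hom EE\cong\comp R\supseteq R$. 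Your version is slightly more self-contained and avoids interpreting the length equality when both sides are infinite. For \eqref{lem110204e}$\implies$\eqref{lem110204b} the paper argues via supports: a module $L$ in $\mathfrak{Q}_0(R)\cap\mathfrak{S}_0(R)$ is $\m$-torsion with $\ann_R(L)=0$, forcing $\spec(R)=\{\m\}$; you again run the finite-length count, embedding $R\cong\Hom CC$ into a finite power of the finite-length module $C$. This is more elementary and has the virtue of handling \eqref{lem110204a}, \eqref{lem110204f}, and \eqref{lem110204e} by one uniform device. The remaining ingredients --- the identification $R=\comp R$ for artinian $R$, the collapse of the finiteness conditions via Lemma~\ref{fact101215a}, the $\ext$-vanishing for the injective module $E$ and the free module $R$, and the compatibility of the two homothety maps under $R\cong\comp R$ --- coincide with the paper's.
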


\begin{proof}
\eqref{lem110204f}$\iff$\eqref{lem110204b}
By~\cite[Theorem 18.6 (ii)]{matsumura:crt} we have $\len_R(R)=\len_R(\md{R})=\len_R(E)$,
where $\len_R(L)$ denotes the length of an $R$-module $L$.
Since $R$ is noetherian by assumption, we have $R$ is artinian if and
only if $R$ has finite length if and only if $\md{R}=E$ has finite length (by
the equalities above), if and only if $E$ is noetherian over $R$ (since $E$ is artinian; see~\cite[Theorem~3.4.1]{enochs:rha}).
That is, $R$ is artinian if and only if $E$ is noetherian over $R$.

\eqref{lem110204a}$\implies$\eqref{lem110204f}
If $E$ is a semidualizing $R$-module, then $E$
is noetherian  over $R$ by definition. 

\eqref{lem110204b}$\implies$\eqref{lem110204a}
Assume that $R$ is artinian.  Then $E$ is finitely generated
by the equivalence~\eqref{lem110204f}$\iff$~\eqref{lem110204b}.
  We have $R\cong\comp R$ since $R$ is artinian,
  and $\comp R\cong\hom EE$ by~\cite[Theorem 18.6 (iv)]{matsumura:crt}
  explaining
the unspecified isomorphisms in the following commutative diagram.
$$\xymatrix{
R\ar[r]^-{\chi_E^R}\ar[d]_-{\cong}
&\hom EE\\
\comp R\ar[ur]_-{\cong}
}$$
 Hence we conclude that the homothety morphism
$\chi_E^R$ is an isomorphism.
Since $E$ is injective, we have that $\Ext{i}{E}{E}=0$ for all $i>0$.  Thus $E$ is a
semidualizing $R$-module.

\eqref{lem110204b}$\implies$\eqref{lem110204c}  Assume that $R$ is artinian,
and let $L$ be an $R$-module.  We show that $L$ is a semidualizing
module if and only if $L$ is a quasidualizing module.  We need
to check the equivalence of three conditions.
For the first condition, $L$ is finitely generated if and only if $L$ is
artinian by Lemma~\ref{fact101215a}.  For the second condition,
the fact that $R$ is artinian implies that $\comp R\cong R$.
This explains the unlabeled isomorphism in the following commutative
diagram
$$\xymatrix{
R\ar[d]_-{\chi_L^R}\ar[r]^{\cong}
&\comp R\ar[dl]^-{\chi_L^{\comp R}}\\
\hom LL.
}$$
Thus the map $\chi_L^R$ is an isomorphism if and only if
the map $\chi_L^{\comp R}$ is an isomorphism.   The $\ext$ vanishing conditions
are equivalent by definition.

For the implication
\eqref{lem110204c}$\implies$\eqref{lem110204d}, assume
that $\mfq=\mfs$.  The $R$-module $R$ is always semidualizing .
Then by assumption it is also a quasidualizing $R$-module.

The implication \eqref{lem110204d}$\implies$\eqref{lem110204b} is evident
since $R$ is an artinian ring if and only if it is an artinian $R$-module.
For the implication \eqref{lem110204d}$\implies$\eqref{lem110204e},
if $R$ is a quasidualizing $R$-module, then the intersection
$\mfq\cap\mfs$ is nonempty since $R$ is also a semidualizing $R$-module.

For the implication \eqref{lem110204e}$\implies$\eqref{lem110204d},
assume that the intersection $\mfq\cap\mfs$ is nonempty.
Let $L\in\mfq\cap\mfs$.  Then $L$ is artinian
and noetherian, so it has finite length.  Since $L$ is artinian,
it is $\m$-torsion and by~\cite[Fact 1.2(b)]{kubik:hamm1} we have
$\supp_R(L)\subseteq\{\m\}$.  Since $L$ is a semidualizing $R$-module,
the map $R\rightarrow \hom LL$ is an isomorphism
so we have $\ann_R(L)\subseteq\ann_R(R)=0$.  This explains the
second step in the following sequence
$$\supp_R(L)=V(\ann_R(L))=V(0)=\spec(R).$$
  Thus
$\spec(R)=\supp_R(L)\subseteq\{\m\}\subseteq\spec(R)$ and
we conclude that
$\spec(R)=\{\m\}$.  Thus~\cite[Theorem 8.5]{atiyah:ica}
implies that $R$ is artinian. 
\end{proof}

\section{Classes of Modules and Matlis Duality}\label{sec110301}

This section explores the connections between the class of
quasidualizing $R$-modules and the class
of semidualizing $R$-modules as well as connections
between different subclasses of $\mathcal{A}_M(R)$, $\mathcal{B}_M(R)$,
and $\mathcal{G}_M^{full}(R)$.
The instrument used to detect
these connections is Matlis Duality.

\begin{thm}\label{prop110113}
Assume that $R$ is complete.  Then the maps
$\xymatrix{
\mathfrak{S}_{0}(R)\ar@<1ex>[r]^-{\md{(-)}}
&\mathfrak{Q}_{0}(R)\ar@<1ex>[l]^-{\md{(-)}}
}$
are inverse bijections.
\end{thm}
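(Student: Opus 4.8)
The plan is to show that Matlis duality $\md{(-)}$ carries semidualizing modules to quasidualizing modules and vice versa, and that it is an involution on these classes up to isomorphism. Since $R$ is complete, Fact~\ref{fact110120b} tells us that a noetherian module has artinian Matlis dual and an artinian module has noetherian Matlis dual, and that noetherian and artinian modules are Matlis reflexive; this reflexivity is what will give us that the two maps are mutually inverse, because $\md{\md{C}}\cong C$ and $\md{\md{T}}\cong T$ for $C$ semidualizing (noetherian) and $T$ quasidualizing (artinian). So the real content is: (1) if $C$ is semidualizing then $\md{C}$ is quasidualizing, and (2) if $T$ is quasidualizing then $\md{T}$ is semidualizing.

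For step (1), suppose $C$ is semidualizing. Then $C$ is finitely generated, hence noetherian, so $\md{C}$ is artinian by Fact~\ref{fact110120b}; this is condition~\eqref{defn101215j1}. For the homothety condition, I would apply Lemma~\ref{lem110120}: taking $L=C$ (which is Matlis reflexive) and $L'=C$, the lemma gives $\Ext{i}{C}{C}\cong\Ext{i}{\md{C}}{\md{C}}$ for all $i\geq 0$. In degree $i>0$ this gives condition~\eqref{defn101215j3}. In degree $i=0$ it gives $\hom CC\cong\hom{\md{C}}{\md{C}}$, but I need to match this with the homothety map $\chi_{\md{C}}^{\comp R}\colon\comp R\to\hom{\md{C}}{\md{C}}$. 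Since $R$ is complete, $R=\comp R$, and I would check that the isomorphism $R\xrightarrow{\chi_C^R}\hom CC$ composed with the natural identification $\hom CC\cong\hom{\md{C}}{\md{C}}$ (which is $\hom C{\delta_C}$ followed by Hom-tensor adjointness, as in the proof of Lemma~\ref{lem110120}) agrees with $\chi_{\md{C}}^R$; this is a diagram chase tracking where $r\in R$ goes, namely $r\mapsto(c\mapsto rc)\mapsto(\psi\mapsto(\text{evaluation twisted by }r))$, which one sees equals multiplication by $r$ on $\md{C}$. Hence $\chi_{\md{C}}^{\comp R}$ is an isomorphism.

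For step (2), suppose $T$ is quasidualizing. Then $T$ is artinian, and since $R$ is complete Fact~\ref{fact110120b} gives that $\md{T}$ is noetherian, hence finitely generated; this is condition~\eqref{defn101215d1}. For the $\ext$-vanishing and homothety conditions I again use Lemma~\ref{lem110120} with $L=T$ (Matlis reflexive, being artinian over the complete ring $R$) and $L'=T$: $\Ext{i}{T}{T}\cong\Ext{i}{\md{T}}{\md{T}}$ for all $i\geq 0$, giving~\eqref{defn101215d3} in positive degrees and, in degree zero together with the same diagram chase as above (now run in the other direction), that $\chi_{\md{T}}^R$ is an isomorphism. The main obstacle, and the only place requiring genuine care, is exactly this degree-zero bookkeeping: one must verify that the abstract isomorphism produced by Lemma~\ref{lem110120} is compatible with the homothety maps on the nose, rather than merely asserting that source and target are abstractly isomorphic. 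Once both steps are in hand, the final assertion follows: $\md{(-)}$ maps $\mfs$ into $\mfq$ and $\mfq$ into $\mfs$, and $\md{\md{(-)}}$ is the identity on each by Matlis reflexivity of noetherian and artinian modules, so the two maps are inverse bijections between $\mfs=\mathfrak{S}_0(R)$ and $\mfq=\mathfrak{Q}_0(R)$.
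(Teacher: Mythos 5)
Your proposal is correct and follows essentially the same route as the paper: show that $\md{(-)}$ exchanges the two classes and then invoke Matlis reflexivity of noetherian and artinian modules over the complete ring to get that the maps are mutually inverse. The degree-zero compatibility you flag as the delicate point is exactly what the paper handles with its explicit commutative diagram (built from $\hom{C}{\delta_C^E}$ and Hom-tensor adjointness), so your "diagram chase tracking where $r$ goes" is the same verification.
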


\begin{proof}
Let $C\in\mathfrak{S}_{0}(R)$.
We show that $\md{C}\in\mathfrak{Q}_{0}(R)$.   Fact~\ref{fact110120b} implies
that $\md{C}$ is artinian.
In the following commutative diagram,
the unspecified isomorphisms are from Hom-tensor
adjointness and the commutativity of tensor product
$$\xymatrix{
R\ar[r]^-{\chi^R_{\md{C}}}\ar[d]^-{\chi^R_C}
&\hom{\md{C}}{\hom CE}\ar[d]^-{\cong}\\
\hom CC\ar[d]^-{\hom{C}{\delta_C^E}}_-{\cong}
&\hom{\Otimes{\md{C}}{C}}{E}\ar[d]^-{\cong}\\
\hom{C}{\hom{\md{C}}{E}}\ar[r]^-{\cong}
&\hom{\Otimes{C}{\md{C}}}{E}.
}$$
Since $C\in \mfs$, it follows that $\chi^R_C$ is an isomorphism.
Fact~\ref{fact110120b} implies that the map $\delta_C^E$, and by
extension the map $\hom{C}{\delta_C^E}$, is an
isomorphism.  
 Hence we conclude from the diagram that $\chi^R_{\md{C}}$ is an
isomorphism.

For the last condition, Lemma~\ref{lem110120} explains the
first step in the following sequence 
$$\Ext{i}{\md{C}}{\md{C}}\cong
\Ext{i}{C}{C}=0.$$
  The second step follows from the fact
that $C$ is a semidualizing module.
Thus $\md{C}$ is a quasidualizing module. 

A similar argument shows that given a quasidualizing
 $R$-module $T$, 
the module $\md{T}$ is semidualizing.
Fact~\ref{fact110120b} implies
that $C\cong C^{\vee\vee}$ and $T\cong T^{\vee\vee}$, 
so that the given maps $\mfs\xrightarrow{\md{(-)}}\mfq$ and
$\mfq\xrightarrow{\md{(-)}}\mfs$
are inverse equivalences.
\end{proof}

\begin{ex}\label{ex110301b}
Assume that $R$ is Cohen-Macaulay and complete and admits a dualizing
module $D$.  The fact that $D$ is dualizing means
that $D$ is semidualizing and has finite injective dimension.
Therefore, by Theorem~\ref{prop110113}, we conclude
that $\md{D}$ is quasidualizing.
\end{ex}

\begin{prop}\label{prop110225}
Assume that  $R$ is complete and let $T$ be a quasidualizing $R$-module.  Then the maps
$\xymatrix{
\mathcal{B}_{\md{T}}^{\text{mr}}(R)\ar@<1ex>[r]^-{\md{(-)}}
&\mathcal{G}_{T}^{\text{mr}}(R)\ar@<1ex>[l]^-{\md{(-)}}
}$
are inverse bijections.
\end{prop}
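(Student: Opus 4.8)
The plan is to show that Matlis duality $\md{(-)}$ carries $\mathcal{B}_{\md{T}}^{\text{mr}}(R)$ into $\mathcal{G}_T^{\text{mr}}(R)$ and, symmetrically, carries $\mathcal{G}_T^{\text{mr}}(R)$ into $\mathcal{B}_{\md{T}}^{\text{mr}}(R)$; since $R$ is complete, every Matlis reflexive module $L$ satisfies $L\cong\md{\md{L}}$ by Fact~\ref{fact110203}, so these two assignments are mutually inverse on the relevant classes, giving the bijection. Note first that by Theorem~\ref{prop110113} the module $C:=\md{T}$ is semidualizing, and since $R$ is complete $\md{C}\cong T$; this lets us phrase everything in terms of a fixed semidualizing $C$ with $T=\md{C}$.

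First I would take $L\in\mathcal{B}_{C}^{\text{mr}}(R)$ and show $\md{L}\in\mathcal{G}_T^{\text{mr}}(R)$. Matlis reflexivity of $\md{L}$ is immediate from Fact~\ref{fact110203}. For the biduality condition, I want the natural map $\delta_{\md L}^{T}\colon \md L\to\hom{\hom{\md L}{T}}{T}$ to be an isomorphism; the strategy is to rewrite both $\hom{\md L}{T}$ and the double-Hom using Hom-tensor adjointness together with $T=\md C=\hom CE$ and the Matlis reflexivity of $L$, converting $\delta_{\md L}^T$ into the Matlis dual of the Bass-class evaluation map $\xi_L^C\colon \Otimes{\hom CL}{C}\to L$, which is an isomorphism by hypothesis. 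Concretely, $\hom{\md L}{T}=\hom{\md L}{\hom CE}\cong\hom{\Otimes{\md L}{C}}{E}=(\Otimes C{\md L})^{\vee}$, and iterating one gets $\hom{\hom{\md L}{T}}{T}\cong\md{(\hom CL)}^{\vee}$-type expressions that collapse via adjointness to $\md L$ precisely when $\xi_L^C$ is an isomorphism; a commutative-diagram argument in the style of the proof of Theorem~\ref{prop110113} pins down that the resulting composite is $\delta_{\md L}^T$. The Ext-vanishing conditions $\Ext{i}{\md L}{T}=0=\Ext{i}{\hom{\md L}{T}}{T}=0$ for $i>0$ are obtained from Lemma~\ref{lem110120} (using that $T$, $C$, $L$, $\md L$ are all Matlis reflexive) to convert them into $\Ext{i}{C}{L}$ and $\Tor{i}{C}{\hom CL}$ vanishing — the defining Bass-class conditions — after again using adjointness to identify $\hom{\md L}{T}$ with $(\Otimes C{\md L})^\vee$ and $\md L$-type duals with $\hom CL$.

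Then I would run the reverse inclusion: take $L'\in\mathcal{G}_T^{\text{mr}}(R)$ and show $\md{L'}\in\mathcal{B}_{C}^{\text{mr}}(R)$, by the symmetric bookkeeping — rewriting the evaluation map $\xi_{\md{L'}}^{C}$ as the Matlis dual of the biduality map $\delta_{L'}^T$ via adjointness and $T=\md C$, and converting the Bass-class Ext/Tor vanishing into the derived $T$-reflexivity vanishing using Lemma~\ref{lem110120}. Finally, Fact~\ref{fact110203} (or Fact~\ref{fact110120b}) gives $L\cong\md{\md L}$ and $L'\cong\md{\md{L'}}$, so the two assignments compose to the identity in both directions. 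The main obstacle is the first bullet: carefully constructing the commutative diagram that identifies the Matlis dual of $\xi_L^C$ with $\delta_{\md L}^T$ (and dually), since this requires tracking several instances of Hom-tensor adjointness, the evaluation isomorphism $\hom EE\cong\comp R\cong R$, and the biduality map $\delta_C^E$ simultaneously, exactly as in the proof of Theorem~\ref{prop110113} but one categorical level up; once that naturality is in hand, everything else is a routine transport of the defining conditions across Lemma~\ref{lem110120} and adjointness.
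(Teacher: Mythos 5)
Your proposal is correct and follows essentially the same route as the paper's proof: both directions are handled by transporting the Bass-class conditions across Matlis duality, with Lemma~\ref{lem110120} and Hom-tensor adjointness converting the Ext/Tor vanishing, a commutative diagram identifying $\delta_{\md{L}}^{T}$ with the Matlis dual of $\xi_{L}^{\md{T}}$, and Matlis reflexivity supplying the inverse bijection. The only cosmetic difference is that for $\Ext{i}{\hom{\md{L}}{T}}{T}$ the paper works with the explicit $E$-resolution of $T$ from Lemma~\ref{lem110207a}, whereas you invoke the Ext--Tor duality directly; these are the same computation.
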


\begin{proof}
Let $M$ be a Matlis reflexive $R$-module.  We show that if
$M\in\mathcal{B}_{\md{T}}^{\text{mr}}(R)$ then
$\md{M}\in\mathcal{G}_T^{\text{mr}}(R)$.  
Fact~\ref{fact110203} implies that $\md{M}$ is Matlis reflexive.
There are three remaining conditions to check. 

First we show that $\Ext{i}{\md{M}}{T}=0$ for all $i>0$.
Since $T$ is artinian and $R$ is complete, Fact~\ref{fact110120b}
implies that $T$ is Matlis reflexive, so we have
\begin{equation}\label{eqprop110224}
\Ext{i}{\md{M}}{T}\cong
\Ext{i}{\md{T}}{M}.
\end{equation}
by Lemma~\ref{lem110120}.
We have $\Ext{i}{\md{T}}{M}=0$ for all $i> 0$
since $M\in\mathcal{B}_{\md{T}}^{\text{mr}}(R)$.  Thus we conclude
$\Ext{i}{\md{M}}{T}=0$ for all $i> 0$.

Next we show that the map $\delta_{\md{M}}^{T}$ is an isomorphism.
The fact that $M\in\mathcal{B}_{\md{T}}^{\text{mr}}(R)$
implies the map $\xi_M^{\md{T}}$ is an isomorphism.  Therefore the map
$\hom{\xi_M^{\md{T}}}{E}$ in the following commutative diagram
is an isomorphism
$$\xymatrix@C=7em{
\md{M}\ar[r]_-{\cong}^-{\hom{\xi_M^{\md{T}}}{E}}\ar[d]_-{\delta_{\md{M}}^T}
&\hom{\Otimes{\hom{\md{T}}{M}}{\md{T}}}{E}\ar[d]^{\cong}\\
\hom{\hom {\md{M}}{T}}{T}\ar[r]_{\cong}
&\hom{\hom{\md{T}}{M}}{T}.
}$$
The unspecified isomorphisms are from Hom-tensor adjointness
and the isomorphism~\eqref{eqprop110224}.
Hence we conclude from the diagram that
$\delta_{\md{M}}^T$ is an isomorphism.

For the last condition, let $I$ be an injective resolution 
of $T$ such that for each $i\geq 0$ we have
$I_i\cong E^{b_i}$ for some $b_i\in\mathbb N$.
Lemma~\ref{lem110207a} implies that $\md{I}$
is a free resolution of $\md{T}$.
This explains steps 
(2) and 
(6)
in the following sequence
\begin{align*}
\Ext{i}{\hom{\md{M}}{T}}{T}&
\stackrel{(1)}{\cong}\Ext{i}{\hom{\md{T}}{M}}{T}\\
&\stackrel{(2)}{\cong}\HH_{-i}(\hom{\hom{\md{T}}{M}}{I})\\
&\stackrel{(3)}{\cong}\HH_{-i}(\hom{\hom{\md{T}}{M}}{I^{\vee\vee}})\\
&\stackrel{(4)}{\cong}\HH_{-i}(\hom{\Otimes{\hom{\md{T}}{M}}{\md{I}}}{E})\\
&\stackrel{(5)}{\cong}\hom{\HH_i(\Otimes{\md{I}}{\hom{\md{T}}{M}})}{E}\\
&\stackrel{(6)}{\cong}\hom{\Tor{i}{\md{T}}{\hom{\md{T}}{M}}}{E}.
\end{align*}
Step (1) follows from the isomorphism~\eqref{eqprop110224}.
Step (3) follows from the fact that any finite direct sum
of artinian modules is artinian; thus $I_j$ is artinian for
all $j$ and we
can apply Fact~\ref{fact110120b}.
Step (4) follows from Hom-tensor adjointness, and
step (5) follows from the fact that $E$ is injective and homology
commutes with exact functors.  
Since $M\in\mathcal{B}_{\md{T}}^{\text{mr}}(R)$, we have
$\Tor{i}{M}{\hom{\md{T}}{M}}=0$ for all $i>0$.  Hence
we conclude that 
$$\Ext{i}{\hom{\md{M}}{T}}{T}\cong
\hom{\Tor{i}{M}{\hom{\md{T}}{M}}}{E}=0$$ for all $i>0$.

Given an $R$-module $M'\in\mathcal{G}_T^{\text{mr}}(R)$, the argument
to show that 
$\md{M'}\in\mathcal{B}_{\md{T}}^{\text{mr}}(R)$ is similar.
Since $M$ and $M'$ are Matlis reflexive, that is $M\cong\mdd{M}$
and $M'\cong\mdd{M'}$, we conclude that the maps
$\mathcal{B}_{\md{T}}^{\text{mr}}(R)\xrightarrow{\md{(-)}}
\mathcal{G}_T^{\text{mr}}(R)$ and 
$\mathcal{G}_T^{\text{mr}}(R)\xrightarrow{\md{(-)}}\mathcal{B}_{\md{T}}^{\text{mr}}(R)$ are inverse equivalences.
\end{proof}

\begin{cor}\label{prop110224}
Assume that  $R$ is complete and let $T$ be a quasidualizing $R$-module.  Then the 
following maps are inverse bijections
$$\xymatrix{
\mathcal{B}_{\md{T}}^{\text{noeth}}(R)\ar@<1ex>[r]^-{\md{(-)}}
&\mathcal{G}_{T}^{\text{artin}}(R)\ar@<1ex>[l]^-{\md{(-)}}
&
\text{and}
&\mathcal{B}_{\md{T}}^{\text{artin}}(R)\ar@<1ex>[r]^-{\md{(-)}}
&\mathcal{G}_{T}^{\text{noeth}}(R)\ar@<1ex>[l]^-{\md{(-)}}.
}$$

\end{cor}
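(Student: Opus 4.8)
The plan is to deduce this directly from Proposition~\ref{prop110225} by restricting the bijection $\mathcal{B}_{\md{T}}^{\text{mr}}(R)\leftrightarrow\mathcal{G}_T^{\text{mr}}(R)$ to the appropriate subclasses. Since $R$ is complete, Fact~\ref{fact110120b} tells us that every noetherian and every artinian $R$-module is Matlis reflexive, so we have inclusions $\mathcal{B}_{\md{T}}^{\text{noeth}}(R),\mathcal{B}_{\md{T}}^{\text{artin}}(R)\subseteq\mathcal{B}_{\md{T}}^{\text{mr}}(R)$ and similarly $\mathcal{G}_T^{\text{noeth}}(R),\mathcal{G}_T^{\text{artin}}(R)\subseteq\mathcal{G}_T^{\text{mr}}(R)$. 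The key extra ingredient is that the duality functor $\md{(-)}$ interchanges the noetherian and artinian conditions over a complete ring, which is also part of Fact~\ref{fact110120b}.

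Concretely, for the first claimed bijection: if $M\in\mathcal{B}_{\md{T}}^{\text{noeth}}(R)$, then $M$ is Matlis reflexive, so $M\in\mathcal{B}_{\md{T}}^{\text{mr}}(R)$, and Proposition~\ref{prop110225} gives $\md{M}\in\mathcal{G}_T^{\text{mr}}(R)$; since $M$ is noetherian, $\md{M}$ is artinian by Fact~\ref{fact110120b}, hence $\md{M}\in\mathcal{G}_T^{\text{artin}}(R)$. Conversely, if $N\in\mathcal{G}_T^{\text{artin}}(R)$, then $N\in\mathcal{G}_T^{\text{mr}}(R)$ and Proposition~\ref{prop110225} gives $\md{N}\in\mathcal{B}_{\md{T}}^{\text{mr}}(R)$; since $N$ is artinian, $\md{N}$ is noetherian, so $\md{N}\in\mathcal{B}_{\md{T}}^{\text{noeth}}(R)$. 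Because Matlis reflexivity gives $M\cong\mdd{M}$ and $N\cong\mdd{N}$, the two restricted maps are mutually inverse. The second bijection is obtained verbatim with the words ``noetherian'' and ``artinian'' interchanged.

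There is no serious obstacle here: the entire content of the corollary is the interchange of the noetherian and artinian finiteness conditions under Matlis duality over a complete ring, which has already been recorded in Fact~\ref{fact110120b}, together with the ``mr'' version of the correspondence from Proposition~\ref{prop110225}. The only point requiring a moment's care is that each restricted map is surjective onto its target subclass; but this is automatic, since any module $N$ in the target is the Matlis dual of $\mdd{N}$, which lies in the source subclass by the same interchange of finiteness conditions.
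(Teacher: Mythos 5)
Your argument is correct and is essentially identical to the paper's own proof: both restrict the Matlis-reflexive bijection of Proposition~\ref{prop110225} to the noetherian and artinian subclasses, using Fact~\ref{fact110120b} to see that $\md{(-)}$ interchanges these two finiteness conditions over a complete ring and that such modules are Matlis reflexive. No further comment is needed.
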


\begin{proof}
Fact~\ref{fact110120b} implies that if $N$ is a noetherian
$R$-module, then $\md{N}$ is an artinian $R$-module
and $N\cong N^{\vee\vee}$.  Furthermore,
if $A$ is an artinan $R$-module, then $\md{A}$ is
a noetherian $R$-module and $A\cong A^{\vee\vee}$.
Together with Proposition~\ref{prop110225}, this implies
that the maps 
$\xymatrix{
\mathcal{B}_{\md{T}}^{\text{noeth}}(R)\ar@<1ex>[r]^-{\md{(-)}}
&\mathcal{G}_{T}^{\text{artin}}(R)\ar@<1ex>[l]^-{\md{(-)}}
}$
are inverse bijections.
The proof for $\xymatrix{
\mathcal{B}_{\md{T}}^{\text{artin}}(R)\ar@<1ex>[r]^-{\md{(-)}}
&\mathcal{G}_{T}^{\text{noeth}}(R)\ar@<1ex>[l]^-{\md{(-)}}
}$ is similar.
\end{proof}

\begin{prop}\label{prop120120}
Assume that  $R$ is complete and let $T$ be a quasidualizing $R$-module.  Then the maps
$\xymatrix{
\mathcal{B}_{T}^{\text{mr}}(R)\ar@<1ex>[r]^-{\md{(-)}}
&\mathcal{G}_{\md{T}}^{\text{mr}}(R)\ar@<1ex>[l]^-{\md{(-)}}
}$
are inverse bijections.
\end{prop}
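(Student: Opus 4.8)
The plan is to follow the proof of Proposition~\ref{prop110225} almost verbatim, interchanging the roles of $T$ and $\md{T}$; the only step that needs a genuinely different argument is the second $\ext$-vanishing condition, since $\md{T}$ is noetherian and so does not admit an injective resolution built from copies of $E$. First I would record the standing facts and two formal identities. Since $R$ is complete and $T$ is artinian, both $T$ and $\md{T}$ are Matlis reflexive (the latter because $\md{T}$ is noetherian), by Facts~\ref{fact110120b} and~\ref{fact110203}; moreover $\md{T}$ is semidualizing by Theorem~\ref{prop110113}, although only Matlis reflexivity of $T$ and $\md{T}$ is actually used. The first identity is $\hom{A}{\md{B}}\cong\hom{B}{\md{A}}$, natural in the $R$-modules $A,B$, obtained from Hom-tensor adjointness applied to $\hom{\Otimes{A}{B}}{E}$ together with commutativity of tensor product; in particular $\hom{\md{M}}{\md{T}}\cong\hom{T}{\mdd{M}}\cong\hom{T}{M}$ for Matlis reflexive $M$, and $\hom{T}{\md{N}}\cong\hom{N}{\md{T}}$ for any $N$. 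The second identity is $\Ext{i}{X}{\hom{T}{E}}\cong\hom{\Tor{i}{X}{T}}{E}$ for every $R$-module $X$ and every $i\geq0$, obtained by applying $\hom{\Otimes{-}{T}}{E}\cong\hom{-}{\hom{T}{E}}$ to a projective resolution of $X$ and using exactness of $\hom{-}{E}$; recall $\hom{T}{E}=\md{T}$.

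For the forward inclusion, let $M\in\mathcal{B}_{T}^{\text{mr}}(R)$. Then $\md{M}$ is Matlis reflexive by Fact~\ref{fact110203}, and three conditions remain. (1)~By Lemma~\ref{lem110120}, $\Ext{i}{\md{M}}{\md{T}}\cong\Ext{i}{T}{\mdd{M}}\cong\Ext{i}{T}{M}$, which vanishes for $i>0$ since $M\in\mathcal{B}_{T}(R)$. (2)~Applying $\hom{-}{E}$ to the evaluation isomorphism $\xi_{M}^{T}\colon\Otimes{\hom{T}{M}}{T}\to M$ (an isomorphism because $M\in\mathcal{B}_{T}(R)$), and combining with Hom-tensor adjointness and the first identity, one obtains a commutative square having $\delta_{\md{M}}^{\md{T}}$ as one side and isomorphisms on the other three sides — the exact analogue of the diagram in the proof of Proposition~\ref{prop110225}, with $\xi_{M}^{T}$ replacing $\xi_{M}^{\md{T}}$ — so $\delta_{\md{M}}^{\md{T}}$ is an isomorphism. (3)~By the first identity, $\Ext{i}{\hom{\md{M}}{\md{T}}}{\md{T}}\cong\Ext{i}{\hom{T}{M}}{\hom{T}{E}}$, which by the second identity equals $\hom{\Tor{i}{\hom{T}{M}}{T}}{E}\cong\hom{\Tor{i}{T}{\hom{T}{M}}}{E}$, and this vanishes for $i>0$ since $M\in\mathcal{B}_{T}(R)$. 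Hence $\md{M}\in\mathcal{G}_{\md{T}}^{\text{mr}}(R)$.

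For the reverse inclusion, let $N\in\mathcal{G}_{\md{T}}^{\text{mr}}(R)$. Then $\md{N}$ is Matlis reflexive, and the three conditions for membership in $\mathcal{B}_{T}(R)$ are checked dually. First, $\Ext{i}{T}{\md{N}}\cong\Ext{i}{N}{\md{T}}=0$ for $i>0$ by Lemma~\ref{lem110120}. Second, the first identity gives $\hom{T}{\md{N}}\cong\hom{N}{\md{T}}$, and by the second identity $\hom{\Tor{i}{\hom{N}{\md{T}}}{T}}{E}\cong\Ext{i}{\hom{N}{\md{T}}}{\md{T}}=0$ for $i>0$ (using $N\in\mathcal{G}_{\md{T}}(R)$); since $Z^{\vee}=0$ forces $Z=0$ by Fact~\ref{fact110203}, we get $\Tor{i}{T}{\hom{T}{\md{N}}}=0$ for $i>0$. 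Third, the evaluation map $\xi_{\md{N}}^{T}$ is shown to be an isomorphism by the diagram chase dual to the one in step~(2), using that $\delta_{N}^{\md{T}}$ is an isomorphism. Finally, every module appearing is Matlis reflexive, i.e., isomorphic to its double Matlis dual, so the two maps $\md{(-)}$ are mutually inverse bijections.

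I expect the two diagram chases — identifying the natural maps $\delta_{\md{M}}^{\md{T}}$ and $\xi_{\md{N}}^{T}$ with the appropriate composites of adjointness isomorphisms — to be the only genuine work; they are routine but demand care in tracking which module occupies which slot, now that the artinian module $T$ plays the role held by the semidualizing module in Proposition~\ref{prop110225}. The one substantive departure from that proof is step~(3): since $\md{T}$ is noetherian, one cannot compute $\Ext{i}{\hom{\md{M}}{\md{T}}}{\md{T}}$ from an injective resolution of $\md{T}$ assembled from copies of $E$, so the $\ext$-vanishing must instead be converted into a $\tor$-vanishing by means of the adjointness identity.
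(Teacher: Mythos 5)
Your proposal is correct and uses essentially the same tools as the paper's proof: Matlis reflexivity, Lemma~\ref{lem110120}, Hom-tensor adjointness, and the duality $\Ext{i}{X}{\md{T}}\cong\md{\Tor{i}{X}{T}}$ to convert the second $\ext$-vanishing condition into a $\tor$-vanishing condition. The only difference is cosmetic: the paper writes out the direction $\mathcal{G}_{\md{T}}^{\text{mr}}(R)\to\mathcal{B}_T^{\text{mr}}(R)$ in detail and calls the other ``similar but easier,'' whereas you detail $\mathcal{B}_T^{\text{mr}}(R)\to\mathcal{G}_{\md{T}}^{\text{mr}}(R)$ by adapting Proposition~\ref{prop110225}.
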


\begin{proof}
Let $M$ be a Matlis reflexive $R$-module.  
We show that if $M\in\mathcal{G}^{\text{mr}}_{\md{T}}(R)$, then
 $\md{M}\in\mathcal{B}^{\text{mr}}_T(R)$.  First we show that
the map $\xi_{\md{M}}^{T}$ is an isomorphism.  The fact that $M$
is Matlis reflexive implies that the map $\delta^E_M$ in the following
commutative diagram is an isomorphism
$$\xymatrix{
M\ar[r]^-{\delta_M^E}_-{\cong}\ar[d]^-{\delta_M^{\md{T}}}
&\mdd{M}\ar[d]^-{\md{(\xi_{\md{M}}^T)}}\\
\hom{\hom{M}{\md{T}}}{\md{T}}\ar[d]^-{\cong}
&\hom{\Otimes{\hom{T}{\md{M}}}{T}}{E}\\
\hom{\hom{T}{\md{M}}}{\md{T}}\ar[ur]_-{\cong}.
}$$
The unspecified 
isomorphisms are
from Hom-tensor adjointness and Lemma~\ref{lem110120}.
Since $M\in\mathcal{G}^{\text{mr}}_{\md{T}}(R)$, we have
that the map $\delta^{\md{T}}_M$ is an isomorphism.
Hence $\md{(\xi_{\md{M}}^T)}$ is an isomorphism.
Since $E$ is faithfully injective, this implies that
$\xi_{\md{M}}^T$ is an isomorphism.

Next we show that $\Ext{i}{T}{\md{M}}=0$ for all $i>0$.
Since $M$ is Matlis reflexive, Lemma~\ref{lem110120}
explains the first step in the following sequence
$\Ext{i}{T}{\md{M}}\cong\Ext{i}{M}{\md{T}}=0$.
The second step follows from the fact that
$M\in\mathcal{G}_{\md{T}}^{\text{mr}}(R)$.

Lastly, we show that $\Tor{i}{T}{\hom{T}{\md{M}}}=0$
for all $i>0$.  The commutativity of tensor
product explains the first step in the following
sequence
\begin{align*}
\md{\Tor{i}{T}{\hom{T}{\md{M}}}}&\cong
\md{\Tor{i}{\hom{T}{\md{M}}}{T}}\\
&\cong
\Ext{i}{\hom{T}{\md{M}}}{\md{T}}\\
&\cong
\Ext{i}{\hom{M}{\md{T}}}{\md{T}}\\
&=0.
\end{align*}
The second step follows from~\cite[Remark~1.9]{kubik:hamm1} and
the third step follows from Lemma~\ref{lem110120}.
The last step follows from the fact that $M\in\mathcal{G}_{\md{T}}^{\text{mr}}(R)$.

Given an $R$-module $M'\in\mathcal{B}_{T}^{\text{mr}}(R)$, the argument
to show that $\md{M'}\in\mathcal{G}_{\md{T}}^{\text{mr}}(R)$ is similar but easier.
Since $M$ and $M'$ are Matlis reflexive, we conclude that
the maps $\mathcal{B}_T^{\text{mr}}(R)\xrightarrow{\md{(-)}}\mathcal{G}_{\md{T}}^{\text{mr}}(R)$
and $\mathcal{G}_{\md{T}}^{\text{mr}}(R)\xrightarrow{\md{(-)}}\mathcal{B}_T^{\text{mr}}(R)$
are inverse equivalences.
\end{proof}

\begin{cor}\label{prop110224}
Assume that  $R$ is complete and let $T$ be a quasidualizing $R$-module.  Then the 
following maps are inverse bijections
$$\xymatrix{
\mathcal{B}_{T}^{\text{noeth}}(R)\ar@<1ex>[r]^-{\md{(-)}}
&\mathcal{G}_{\md{T}}^{\text{artin}}(R)\ar@<1ex>[l]^-{\md{(-)}}
&
\text{and}
&\mathcal{B}_{T}^{\text{artin}}(R)\ar@<1ex>[r]^-{\md{(-)}}
&\mathcal{G}_{\md{T}}^{\text{noeth}}(R)\ar@<1ex>[l]^-{\md{(-)}}.
}$$
\end{cor}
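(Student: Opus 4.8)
The plan is to deduce this corollary from Proposition~\ref{prop120120} in exactly the same way that the preceding Corollary~\ref{prop110224} was deduced from Proposition~\ref{prop110225}. The essential input, recorded in Fact~\ref{fact110120b}, is that since $R$ is complete, every noetherian $R$-module $N$ and every artinian $R$-module $A$ is Matlis reflexive (so $N\cong N^{\vee\vee}$ and $A\cong A^{\vee\vee}$), and moreover $\md{N}$ is artinian while $\md{A}$ is noetherian. Thus Matlis duality interchanges the noetherian and artinian properties over $R$, and the job is purely to check that the ``$\text{mr}$'' bijection of Proposition~\ref{prop120120} restricts correctly to the relevant subclasses.

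First I would record the inclusions $\mathcal{B}_{T}^{\text{noeth}}(R)\subseteq\mathcal{B}_{T}^{\text{mr}}(R)$ and $\mathcal{G}_{\md{T}}^{\text{artin}}(R)\subseteq\mathcal{G}_{\md{T}}^{\text{mr}}(R)$, which are immediate from Fact~\ref{fact110120b}. By Proposition~\ref{prop120120}, $\md{(-)}$ is a bijection $\mathcal{B}_{T}^{\text{mr}}(R)\to\mathcal{G}_{\md{T}}^{\text{mr}}(R)$ with inverse $\md{(-)}$. If $M\in\mathcal{B}_{T}^{\text{noeth}}(R)$, then $M$ is noetherian, so $\md{M}$ is artinian by Fact~\ref{fact110120b}; combined with $\md{M}\in\mathcal{G}_{\md{T}}^{\text{mr}}(R)$ from Proposition~\ref{prop120120}, this places $\md{M}$ in $\mathcal{G}_{\md{T}}^{\text{artin}}(R)$. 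Conversely, if $M'\in\mathcal{G}_{\md{T}}^{\text{artin}}(R)$, then $M'$ is artinian, so $\md{M'}$ is noetherian, and $\md{M'}\in\mathcal{B}_{T}^{\text{mr}}(R)$ by Proposition~\ref{prop120120}, whence $\md{M'}\in\mathcal{B}_{T}^{\text{noeth}}(R)$. Since $N\cong N^{\vee\vee}$ for every module appearing here, the two restricted maps are mutually inverse, giving the first pair of bijections.

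The second pair is handled identically, with the words ``noeth'' and ``artin'' interchanged: a module in $\mathcal{B}_{T}^{\text{artin}}(R)$ is artinian, hence Matlis reflexive with noetherian Matlis dual, which by Proposition~\ref{prop120120} lies in $\mathcal{G}_{\md{T}}^{\text{mr}}(R)$ and therefore in $\mathcal{G}_{\md{T}}^{\text{noeth}}(R)$; and conversely a noetherian module in $\mathcal{G}_{\md{T}}^{\text{mr}}(R)$ dualizes into an artinian module in $\mathcal{B}_{T}^{\text{mr}}(R)$, i.e.\ into $\mathcal{B}_{T}^{\text{artin}}(R)$. Again $N\cong N^{\vee\vee}$ throughout makes the two maps mutually inverse.

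I do not expect any genuine obstacle: all the analytic content (the four isomorphisms involving $\xi$, $\delta$, $\ext$, and $\tor$) already lives in Proposition~\ref{prop120120}, and the corollary is a routine restriction argument. The only point requiring a moment's care is confirming that the ``$\text{mr}$''-bijection truly restricts—that $\md{(-)}$ cannot carry a noetherian module in $\mathcal{B}_{T}(R)$ to a non-artinian object of $\mathcal{G}_{\md{T}}^{\text{mr}}(R)$—but that is precisely the dual-swapping assertion of Fact~\ref{fact110120b}, so nothing new is needed.
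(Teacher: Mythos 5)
Your proposal is correct and is exactly the argument the paper intends: the paper states this corollary without proof precisely because it follows from Proposition~\ref{prop120120} by the same restriction argument used to deduce the first Corollary~\ref{prop110224} from Proposition~\ref{prop110225}, namely that over a complete ring Matlis duality interchanges noetherian and artinian modules and both are Matlis reflexive (Fact~\ref{fact110120b}). No gaps.
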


The next proposition establishes the relationship between
a subclass of the Auslander class and a subclass of the
derived reflexive modules.

\begin{prop}\label{prop110130}
If $R$ is complete and $T$ is a quasidualizing $R$-module,
then 
$$\mathcal{G}_{\md{T}}^{\text{mr}}(R)=\mathcal{A}_T^{\text{mr}}(R).$$
\end{prop}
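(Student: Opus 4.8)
The plan is to prove the equality $\mathcal{G}_{\md{T}}^{\text{mr}}(R)=\mathcal{A}_T^{\text{mr}}(R)$ by double containment, unwinding the three defining conditions of each class and translating between them using Lemma~\ref{lem110120} together with the hypothesis that $T$ is quasidualizing (so $\hom TT\cong\comp R\cong R$ since $R$ is complete). Throughout, all modules are Matlis reflexive, so every instance of $\delta^E_{(-)}$ is an isomorphism and biduality $M\cong\mdd M$ is available; this is what makes the translation mechanism work.

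First I would take $M\in\mathcal{G}_{\md{T}}^{\text{mr}}(R)$ and show $M\in\mathcal{A}_T^{\text{mr}}(R)$. For the $\tor$-vanishing condition $\Tor{i}{T}{M}=0$: applying $\md{(-)}$ and using the commutativity of tensor product together with \cite[Remark~1.9]{kubik:hamm1} (as in the proof of Proposition~\ref{prop120120}) gives $\md{\Tor{i}{T}{M}}\cong\Ext{i}{M}{\md T}=0$ because $M$ is derived $\md T$-reflexive; since $E$ is faithfully injective this forces $\Tor{i}{T}{M}=0$. For the second $\ext$-vanishing condition $\Ext{i}{T}{\Otimes{T}{M}}=0$: I expect the key identification to be $\Otimes{T}{M}\cong\hom{\hom{M}{\md T}}{E}$, obtained from Hom-tensor adjointness and $\md{\md T}\cong T$; feeding this into Lemma~\ref{lem110120} and the derived-reflexivity $\Ext{i}{\hom{M}{\md T}}{\md T}=0$ yields the vanishing, via an argument paralleling step (1)--(6) in the proof of Proposition~\ref{prop110225}. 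For the isomorphism condition, I would build a commutative diagram relating $\gamma_M^T:M\rightarrow\hom{T}{\Otimes{T}{M}}$ to the map $\delta_M^{\md T}:M\rightarrow\hom{\hom M{\md T}}{\md T}$ (which is an isomorphism by hypothesis) through Hom-tensor adjointness and the biduality isomorphisms; concluding that $\gamma_M^T$ is an isomorphism.

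For the reverse containment, take $M\in\mathcal{A}_T^{\text{mr}}(R)$ and run essentially the same diagram and homology computations in reverse: $\Tor{i}{T}{M}=0$ gives $\Ext{i}{M}{\md T}\cong\md{\Tor{i}{T}{M}}=0$; the identification $\Otimes T M\cong\mdp{\hom{M}{\md T}}$ together with $\Ext{i}{T}{\Otimes T M}=0$ and Lemma~\ref{lem110120} gives $\Ext{i}{\hom M{\md T}}{\md T}=0$; and the same commutative diagram now shows $\delta_M^{\md T}$ is an isomorphism because $\gamma_M^T$ is. Hence $M$ is derived $\md T$-reflexive and Matlis reflexive.

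The main obstacle will be bookkeeping rather than any deep idea: setting up the commutative diagram that correctly links $\gamma_M^T$ with $\delta_M^{\md T}$ through adjunction, and making sure that in the $\ext$/$\tor$ translations one is applying Lemma~\ref{lem110120} to a genuinely Matlis reflexive module (here $\md T$, $T$, and the various $\hom$'s of artinian/noetherian modules, all of which are Matlis reflexive over the complete ring $R$ by Fact~\ref{fact110120b} and Fact~\ref{fact110120a}). Once the identification $\Otimes T M\cong\mdp{\hom M{\md T}}$ is in hand, both directions are symmetric, so I would prove that isomorphism carefully once and reuse it.
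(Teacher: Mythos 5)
Your proposal is correct and follows essentially the same route as the paper: the identification $\Otimes TM\cong\md{(\hom{M}{\md{T}})}$ via Hom-tensor adjointness, the $\ext$/$\tor$ translation through $\md{(-)}$ and Lemma~\ref{lem110120}, and a commutative diagram linking $\gamma_M^T$ to $\delta_M^{\md{T}}$ through biduality. The only point to nail down explicitly is that $\Otimes TM$ is Matlis reflexive (the paper gets this from the fact that it is artinian, citing \cite{kubik:hamm1}), which is needed both in the diagram and in the $\ext$ chain.
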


\begin{proof}
Let $M$ be a Matlis reflexive $R$-module.  We show that
$M$ satisfies the defining conditions of $\mathcal{G}_{\md{T}}^{\text{mr}}(R)$ if
and only if $M$ satisfies the defining conditions of $\mathcal{A}_T^{\text{mr}}(R)$.
For the isomorphisms, consider the following commutative diagram
$$\xymatrix{
M\ar[r]^-{\delta_M^{\md{T}}}\ar[d]^-{\gamma_M^T}
&\hom{\hom{M}{\md{T}}}{\md{T}}\ar[d]^{\cong}\\
\hom{T}{\Otimes TM}\ar[d]^{\hom{T}{\delta_{\Otimes TM}^E}}_{\cong}
&\hom{\Otimes{\hom{M}{\md{T}}}{T}}{E}\ar[d]^{\cong}\\
\hom{T}{\hom{\md{(\Otimes TM)}}{E}}\ar[r]_-{\cong}
&\hom{T}{\hom{\hom{M}{\md{T}}}{E}}.
}$$
The unspecified isomorphisms are Hom-tensor adjointness.
The module $\Otimes TM$ is artinian by~\cite[Lemma 1.19 and Theorem 3.1]{kubik:hamm1}.
Fact~\ref{fact110120b} implies that the
map $\delta_{\Otimes TM}^E$, and hence
the map $\hom{T}{\delta_{\Otimes TM}^E}$, is an isomorphism.
Therefore the map $\gamma_M^T$ is an isomorphism if and only
if the map $\delta_M^{\md{T}}$ is an isomorphism.

Next we show that for all $i>0$ we have
$\Ext{i}{M}{\md{T}}=0$ if and only if $\Tor{i}{M}{T}=0$.
By~\cite[Remark 1.9]{kubik:hamm1}, we have 
$\Ext{i}{M}{\md{T}}\cong\md{\Tor{i}{M}{T}}$.
Because the Matlis dual of a module is zero if and only
if the module is zero, we conclude that
$\Ext{i}{M}{\md{T}}=0$ if and only if $\Tor{i}{M}{T}=0$ for all $i>0$.

Next we show that for all $i>0$ we have 
$\Ext{i}{\hom{M}{\md{T}}}{\md{T}}=0$ if and only if
$\Ext{i}{T}{\Otimes MT}=0$.
Hom-tensor adjointness explains the first step
in the following sequence
\begin{align*}
\Ext{i}{\hom{M}{\md{T}}}{\md{T}}&\cong
\Ext{i}{\md{(\Otimes MT)}}{\md{T}}\\
&\cong\Ext{i}{T^{\vee\vee}}{(\Otimes MT)^{\vee\vee}}\\
&\cong\Ext{i}{T}{\Otimes MT}.
\end{align*}
The second step follows from Lemma~\ref{lem110120}
and the fact that $T$ is artinian and thus Matlis reflexive.
The third step follows from the fact that $T$ and $\Otimes MT$ are
artinian and hence Matlis reflexive; see~\cite[Corollary 3.9]{kubik:hamm1}.
\end{proof}

\begin{cor}\label{cor120203}
Assume that  $R$ is complete and let $T$ be a quasidualizing $R$-module.
Then 
%
 $\mathcal{G}_{\md{T}}^{\text{noeth}}(R)=\mathcal{A}_T^{\text{noeth}}(R)$ and
$\mathcal{G}_{\md{T}}^{\text{artin}}(R)=\mathcal{A}_T^{\text{artin}}(R)$.
\end{cor}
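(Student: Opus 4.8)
The plan is to read off both equalities from Proposition~\ref{prop110130} by intersecting the classes appearing there with the relevant subclasses of modules. First I would note that, since $R$ is complete, Fact~\ref{fact110120b} ensures that every noetherian $R$-module and every artinian $R$-module is Matlis reflexive. Consequently, for a noetherian $R$-module $L$ the conditions ``$L$ is derived $\md{T}$-reflexive'' and ``$L$ is a Matlis reflexive derived $\md{T}$-reflexive module'' coincide, so that $\mathcal{G}_{\md{T}}^{\text{noeth}}(R)$ is precisely the class of noetherian modules in $\mathcal{G}_{\md{T}}^{\text{mr}}(R)$; in the same way $\mathcal{A}_T^{\text{noeth}}(R)$ is precisely the class of noetherian modules in $\mathcal{A}_T^{\text{mr}}(R)$, and both statements remain valid with ``artinian'' in place of ``noetherian''.

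Given this, Proposition~\ref{prop110130} provides the equality of classes $\mathcal{G}_{\md{T}}^{\text{mr}}(R)=\mathcal{A}_T^{\text{mr}}(R)$. Intersecting both sides with the class of all noetherian $R$-modules then yields $\mathcal{G}_{\md{T}}^{\text{noeth}}(R)=\mathcal{A}_T^{\text{noeth}}(R)$, and intersecting both sides with the class of all artinian $R$-modules yields $\mathcal{G}_{\md{T}}^{\text{artin}}(R)=\mathcal{A}_T^{\text{artin}}(R)$, as desired.

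I expect no genuine obstacle here: the entire content is the reduction to Proposition~\ref{prop110130}, and the only point that merits comment is the role of the completeness hypothesis, which via Fact~\ref{fact110120b} places all noetherian and all artinian modules inside the Matlis reflexive classes on which Proposition~\ref{prop110130} operates. This is the same ``mr''-to-``noeth''/``artin'' bookkeeping already carried out for the corollaries of Proposition~\ref{prop110225} and Proposition~\ref{prop120120}.
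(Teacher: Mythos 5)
Your argument is correct and is exactly the (implicit) justification the paper intends: Proposition~\ref{prop110130} gives $\mathcal{G}_{\md{T}}^{\text{mr}}(R)=\mathcal{A}_T^{\text{mr}}(R)$, and since completeness makes every noetherian and every artinian module Matlis reflexive by Fact~\ref{fact110120b}, intersecting with the noetherian (resp.\ artinian) modules yields both equalities. No issues.
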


\begin{prop}\label{prop120116}
If $R$ is complete and $T$ is a quasidualizing $R$-module,
then 
$$\mathcal{G}_{T}^{\text{mr}}(R)=\mathcal{A}_{\md{T}}^{\text{mr}}(R).$$
\end{prop}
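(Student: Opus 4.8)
The plan is to mirror the argument for Proposition~\ref{prop110130} but with the roles of $T$ and $\md{T}$ interchanged, using the fact (Theorem~\ref{prop110113}) that $\md{T}$ is semidualizing and hence itself quasidualizing-compatible under a second dual, together with $T\cong\mdd{T}$. Let $M$ be a Matlis reflexive $R$-module; I want to show that $M$ satisfies the defining conditions of $\mathcal{G}_{T}^{\text{mr}}(R)$ if and only if it satisfies those of $\mathcal{A}_{\md{T}}^{\text{mr}}(R)$. Throughout I will freely use that $T$ and $\md{T}$ are artinian and noetherian respectively (Fact~\ref{fact110120b}), hence Matlis reflexive, and Lemma~\ref{lem110120}.

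First I would handle the biduality/unit maps. One considers a commutative diagram with $M$ in the upper left, the biduality map $\delta_M^{T}:M\to\hom{\hom{M}{T}}{T}$ going down, the Auslander unit $\gamma_M^{\md{T}}:M\to\hom{\md{T}}{\Otimes{\md{T}}{M}}$ going right (or vice versa), and the remaining sides filled by Hom-tensor adjointness isomorphisms and an application of Lemma~\ref{lem110120}. The key input is that $\Otimes{\md{T}}{M}$ is noetherian: here $\md{T}$ is noetherian and $M$ is Matlis reflexive, so I expect to invoke a result from \cite{kubik:hamm1} (the dual of the ``$\Otimes TM$ is artinian'' statement cited in Proposition~\ref{prop110130}) to conclude that this tensor product is Matlis reflexive, making the relevant biduality map $\delta_{\Otimes{\md{T}}{M}}^E$ an isomorphism. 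The diagram then forces $\gamma_M^{\md{T}}$ to be an isomorphism exactly when $\delta_M^{T}$ is.

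Next, the $\ext$ and $\tor$ vanishing conditions. For the first, $\Ext{i}{M}{T}\cong\md{\Tor{i}{M}{\md{T}}}$ by \cite[Remark~1.9]{kubik:hamm1} (applied with $\md{T}$, using $T\cong\mdd{T}$), so $\Ext{i}{M}{T}=0$ iff $\Tor{i}{\md{T}}{M}=0$ since a module vanishes iff its Matlis dual does. For the second, Hom-tensor adjointness together with Lemma~\ref{lem110120} gives
\begin{align*}
\Ext{i}{\hom{M}{T}}{T}&\cong\Ext{i}{\md{(\Otimes{M}{\md{T}})}}{\mdd{T}}\\
&\cong\Ext{i}{\md{T}}{\Otimes{M}{\md{T}}},
\end{align*}
using that $T$ and $\Otimes{M}{\md{T}}$ are Matlis reflexive, so the last condition of $\mathcal{G}_{T}^{\text{mr}}(R)$ matches the last condition of $\mathcal{A}_{\md{T}}^{\text{mr}}(R)$. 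Assembling the three equivalences yields $\mathcal{G}_{T}^{\text{mr}}(R)=\mathcal{A}_{\md{T}}^{\text{mr}}(R)$.

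The main obstacle I anticipate is bookkeeping rather than conceptual: making sure the correct variant of Lemma~\ref{lem110120} and of the \cite{kubik:hamm1} Matlis-reflexivity statements is applied in each diagram leg — in particular verifying that $\Otimes{\md{T}}{M}$ is Matlis reflexive when $\md{T}$ is noetherian and $M$ is Matlis reflexive, which is the dual situation to the one cited in Proposition~\ref{prop110130} and may require a slightly different reference or a short argument via $M\cong\mdd{M}$ and $\Ext{}{\md T}{-}$ of an injective resolution. Once that reflexivity is in hand, every other step is a formal adjointness manipulation parallel to the proof of Proposition~\ref{prop110130}.
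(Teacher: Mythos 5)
Your proposal follows essentially the same route as the paper's proof: the same commutative diagram relating $\delta_M^T$ and $\gamma_M^{\md{T}}$ via Hom-tensor adjointness and the reflexivity of $T$ and of $\Otimes{\md{T}}{M}$, the same use of \cite[Remark~1.9]{kubik:hamm1} for matching the $\ext$/$\tor$ vanishing, and the same adjointness chain for the final condition. One small correction: $\Otimes{\md{T}}{M}$ need not be noetherian (take $M$ artinian, e.g.\ $M=E$), but it is Matlis reflexive --- which is exactly what the paper cites (\cite[Corollary~3.6]{kubik:hamm1}) and is all your diagram actually uses, so the argument goes through unchanged.
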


\begin{proof}
Let $M$ be a Matlis reflexive $R$-module.
We show that $M$ satisfies the defining conditions of $\mathcal{G}_T^{\text{mr}}(R)$
if and only if $M$ satisfies the defining conditions of $\mathcal{A}_{\md{T}}^{\text{mr}}(R)$.
For the isomorphisms, consider the following commutative diagram
$$\xymatrix{
M
\ar[rrr]^-{\delta_M^T}
\ar[d]^{\gamma_M^{\md{T}}}
&&&\hom{\hom MT}{T}
\ar[dddd]^{\cong}_{\hom{\hom MT}{\delta_T^E}}\\
\hom{\md{T}}{\Otimes{\md{T}}{M}}\ar[d]^{\hom{\md{T}}{\delta^E_{\md{T}\otimes M}}}
&&&\\
\hom{\md{T}}{\mdd{(\Otimes{\md{T}}{M})}}\ar[d]^{\cong}&&&\\
\hom{\hom{\Otimes{\md{T}}{M}}{E}}{\mdd{T}}\ar[d]^{\cong}&&&\\
\hom{\hom{M}{\mdd{T}}}{\mdd{T}}\ar[rrr]^-{\cong}_{\hom{\hom{M}{\delta^E_T}}{\mdd{T}}}&&&
\hom{\hom MT}{\mdd{T}}
}$$
where the unlabeled isomorphisms are Hom-tensor adjointness and Hom-swap.  Since
$T$ is artinian and hence Matlis reflexive, both the right hand map and the bottom
map are isomorphisms.  
The module $\Otimes{\md{T}}{M}$ is Matlis reflexive by~\cite[Corollary~3.6]{kubik:hamm1}.
Thus the map $\delta^E_{\md{T}\otimes M}$ and hence the map
$\hom{\md{T}}{\delta^E_{\md{T}\otimes M}}$ is an  isomorphism.
Therefore the map $\gamma^{\md{T}}_{M}$ is an isomorphism
if and only if the map $\delta^T_M$ is an isomorphism.

Next we show that for all $i>0$ we have $\Ext{i}{M}{T}=0$ if and only if
$\Tor{i}{\md{T}}{M}=0$.  The fact that $T$ is artinian
and hence Matlis reflexive explains the first step in the
following sequence
$$\Ext{i}{M}{T}\cong\Ext{i}{M}{\mdd{T}}\cong
\md{\Tor{i}{M}{\md{T}}}\cong\md{\Tor{i}{\md{T}}{M}}.$$
The second step follows from~\cite[Remark~1.9]{kubik:hamm1}
and the last step follows from the commutativity  of the tensor product.
Because the Matlis dual of a module is zero if and only if the module is zero, we conclude that 
$\Ext{i}{M}{T}=0$ if and only if $\Tor{i}{\md{T}}{M}=0$ for all $i>0$.

Next we show that for all $i>0$ we have
$\Ext{i}{\hom MT}{T}=0$ if and only if $\Ext{i}{\md{T}}{\Otimes{\md{T}}{M}}=0$.
The fact that $T$ is artinian and
hence Matlis reflexive explains the first and third steps in the following
sequence
\begin{align*}
\Ext{i}{\hom MT}{T}&\cong
\Ext{i}{\hom{M}{\mdd{T}}}{T}\\
&\cong\Ext{i}{\hom{\Otimes{M}{\md{T}}}{E}}{T}\\
&\cong\Ext{i}{\hom{\Otimes{M}{\md{T}}}{E}}{\mdd{T}}\\
&\cong\Ext{i}{\md{T}}{\Otimes{M}{\md{T}}}.
\end{align*}
The second step follows from Hom-tensor adjointness and the last step
follows from Lemma~\ref{lem110120}.
\end{proof}

\begin{cor}\label{cor110225c}
Assume that  $R$ is complete and let $T$ be a quasidualizing $R$-module.
Then 
%
$\mathcal{G}_T^{\text{noeth}}(R)=\mathcal{A}_{\md{T}}^{\text{noeth}}(R)$ and
$\mathcal{G}_T^{\text{artin}}(R)=\mathcal{A}_{\md{T}}^{\text{artin}}(R)$.
\end{cor}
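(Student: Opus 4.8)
The plan is to obtain both equalities as immediate consequences of Proposition~\ref{prop120116}. That proposition already establishes the equality $\mathcal{G}_T^{\text{mr}}(R)=\mathcal{A}_{\md{T}}^{\text{mr}}(R)$, i.e.\ the two classes agree once one restricts attention to Matlis reflexive modules. The only additional ingredient needed is Fact~\ref{fact110120b}: since $R$ is complete, every noetherian $R$-module and every artinian $R$-module is Matlis reflexive. Thus, for a noetherian $R$-module $N$, the conditions ``$N$ is derived $T$-reflexive'' and ``$N\in\mathcal{G}_T^{\text{mr}}(R)$'' are equivalent, and likewise ``$N\in\mathcal{A}_{\md{T}}(R)$'' and ``$N\in\mathcal{A}_{\md{T}}^{\text{mr}}(R)$'' are equivalent, because the Matlis reflexivity hypothesis built into the ``$\text{mr}$'' versions is automatic.

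Consequently one computes
$$\mathcal{G}_T^{\text{noeth}}(R)=\{\text{noetherian }R\text{-modules}\}\cap\mathcal{G}_T^{\text{mr}}(R)=\{\text{noetherian }R\text{-modules}\}\cap\mathcal{A}_{\md{T}}^{\text{mr}}(R)=\mathcal{A}_{\md{T}}^{\text{noeth}}(R),$$
where the middle equality is Proposition~\ref{prop120116}. Running the identical argument with ``artinian'' in place of ``noetherian'' throughout --- again permissible since artinian modules over the complete ring $R$ are Matlis reflexive by Fact~\ref{fact110120b} --- yields $\mathcal{G}_T^{\text{artin}}(R)=\mathcal{A}_{\md{T}}^{\text{artin}}(R)$.

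I do not expect any genuine obstacle: all of the substantive work is carried out in Proposition~\ref{prop120116}, and the corollary is obtained simply by intersecting both sides of that identity with a class of modules (the noetherian ones, respectively the artinian ones) that is automatically contained in the class of Matlis reflexive modules. This is precisely parallel to how Corollary~\ref{cor120203} is deduced from Proposition~\ref{prop110130}, so the write-up can be kept very short.
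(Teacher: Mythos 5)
Your argument is correct and is exactly the intended one: the paper states this corollary without proof precisely because it follows from Proposition~\ref{prop120116} together with Fact~\ref{fact110120b} (noetherian and artinian modules over the complete ring $R$ are Matlis reflexive), which is the same deduction the paper spells out for the analogous Corollary~\ref{prop110224}. Nothing further is needed.
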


The above results show that the classes $\mathcal{G}_T^{\text{mr}}(R)$,
$\mathcal{G}_T^{\text{artin}}(R)$, and $\mathcal{G}_T^{\text{noeth}}(R)$
do not exhibit some of the same properties as the class
$\mathcal{G}_C^{\text{noeth}}(R)$, where $C$ is semidualizing.  
For instance, we consider
the following property.  We say a class of $R$-modules $\mathcal{C}$
satisfies the two-of-three condition  if given an exact
sequence of $R$-module homomorphisms
$0\rightarrow L_1\rightarrow L_2\rightarrow L_3\rightarrow 0$,
 when any two of the modules are in $\mathcal{C}$, so is the third.
The two-of-three condition
holds for some classes of modules and not for others.
For example, the class of noetherian modules and the class of artinian
modules both satisfy the two-of-three condition.
On the other hand, the class $\mathcal{G}_{C}^{\text{noeth}}(R)$
does not satisfy the two-of-three condition when $C$ is semidualizing.
In contrast, the next result shows that the class $\mathcal{G}_T^{full}(R)$
satisfies the two-of-three condition when the ring is complete.
This is somewhat surprising since the definitions of 
$\mathcal{G}_{C}^{\text{noeth}}(R)$ and $\mathcal{G}_{T}^{full}(R)$
are so similar.   First we need a lemma.
In the language of~\cite{holm:fear}, is says that
quasidualizing implies faithfully quasidualizing.

\begin{lem}\label{lem110125}
Let $L$ and $T$ be $R$-modules
such that $T$ is quasidualizing.  If one has $\hom LT=0$, then $L=0$.
\end{lem}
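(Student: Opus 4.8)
The plan is to exploit the fact that a quasidualizing module $T$ is a module over the complete ring $\comp R$ (Remark~\ref{rmk120111}), so its Matlis dual $\md{T}$ is a \emph{semidualizing} $\comp R$-module by Theorem~\ref{prop110113} (applied over $\comp R$, using Proposition~\ref{prop110127}). Since $\md{T}$ is finitely generated over $\comp R$ with $\chi^{\comp R}_{\md T}$ an isomorphism, it is in particular a faithful $\comp R$-module: $\ann_{\comp R}(\md T)\subseteq\ann_{\comp R}(\comp R)=0$. The strategy is to transfer the hypothesis $\hom LT=0$ across Matlis duality to a statement about $\md T$, where faithfulness of the semidualizing module can be brought to bear.

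First I would reduce to the case where $L$ is artinian. Write $L$ as a directed union of its finitely generated submodules $L_\alpha$; since $\hom{-}{T}$ is left exact, $\hom{L_\alpha}{T}=0$ for every $\alpha$, and it suffices to show each $L_\alpha=0$. A finitely generated module $L_\alpha$ with $\hom{L_\alpha}{T}=0$ — here I would instead not truncate but pass to a quotient: if $L\neq 0$ pick any cyclic submodule, or better, observe that it is enough to derive a contradiction from $L\neq 0$ together with $\hom LT=0$ by finding \emph{some} nonzero map $L\to T$. The cleanest route: since $T$ is artinian it has a simple submodule, and $T$ being $\m$-torsion forces that simple submodule to be $\cong k$; thus there is an embedding $k\into T$, hence $\soc(T)\neq 0$. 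Now if $L\neq 0$, then $L$ has a nonzero cyclic quotient, hence a nonzero quotient isomorphic to $R/\p$ for some prime $\p$; localizing/reducing further, $L$ maps onto $k=R/\m$ (pass to $L/\m L$, which is a nonzero vector space, then project onto a line). Composing $L\onto k\into\soc(T)\subseteq T$ gives a nonzero element of $\hom LT$, contradiction. Hence $L=0$.

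The one point that needs care — and the main obstacle — is producing the embedding $k\into T$, i.e.\ showing $\soc(T)\neq 0$ when $T\neq 0$ is artinian. This is standard: a nonzero artinian module has a minimal nonzero submodule, which is simple, and any simple $R$-module is $\cong R/\fn$ for a maximal ideal $\fn$; since $T$ is artinian it is $\m$-torsion (as in Fact~\ref{fact110518} and~\cite{kubik:hamm1}), so the only maximal ideal in its support is $\m$, forcing the simple submodule to be $\cong k$. One must also confirm $T\neq 0$: if $T=0$ then $\chi^{\comp R}_T$ is the zero map $\comp R\to 0$, which is an isomorphism only if $\comp R=0$, i.e.\ $R=0$, and then $L=0$ trivially; so we may assume $T\neq 0$. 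Assembling these pieces: from $L\neq 0$ we get a surjection $L\onto k$ (via $L/\m L\neq 0$), from $T\neq 0$ we get an injection $k\into T$, and the composite contradicts $\hom LT=0$.

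Alternatively, and perhaps more in the spirit of the semidualizing literature, one can argue: $\md T$ is a faithful finitely generated $\comp R$-module, so $\supp_{\comp R}(\md T)=\spec(\comp R)$, in particular $\m\comp R\in\supp_{\comp R}(\md T)$, whence $\Hom[\comp R]{k}{\md T}\neq 0$ or equivalently $k\otimes_{\comp R}\md T\neq 0$; dualizing, $T$ has $k$ in its socle. This gives the same embedding $k\into T$. Either way the proof is short once the socle fact is in hand, and no serious obstacle remains beyond that observation.
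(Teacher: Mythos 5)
There is a genuine gap at the step where you pass from $L\neq 0$ to a surjection $L\onto k$. You assert that $L/\m L$ is a nonzero vector space, but that is Nakayama's lemma and requires $L$ to be finitely generated (or at least not $\m$-divisible). For a general nonzero $L$ one can have $\m L=L$, in which case $L$ admits no nonzero map to $k$ at all, so no map $L\to T$ can factor through the socle of $T$. A concrete instance: over a complete non-artinian local domain take $T=E$ and $L=E$; then $\m E=E$ (multiplication by a nonzerodivisor of $\m$ is surjective on the injective module $E$), so $\hom{E}{k}=0$, yet $\hom EE\cong\comp R\neq 0$. The lemma is true for such $L$, but your factorization strategy cannot detect it --- this is not a fixable technicality but a structural limitation of routing every map through $k$. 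Your opening reduction to finitely generated submodules would have repaired this if it worked, but it does not: $\hom{-}{T}$ carries the inclusion $L_\alpha\into L$ to a restriction map $\hom LT\to\hom{L_\alpha}{T}$ that need not be surjective when $T$ is not injective, so $\hom LT=0$ gives no control over $\hom{L_\alpha}{T}$; you rightly abandoned that route, but the quotient route you substituted only reaches quotients of $L$, and $k$ need not be one of them.

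For contrast, the paper's proof never tries to map out of $L$ directly: since $T$ is Matlis reflexive over $\comp R$, one has $\hom LT\cong\hom{\md{T}}{\md{L}}$, so the hypothesis becomes a statement about the semidualizing module $\md{T}$ mapping into $\md{L}$; the faithfulness result of \cite[Proposition 3.6]{holm:fear} then gives $\md{L}=0$, and the injectivity of the biduality map $\delta_L^E\colon L\to\mdd{L}$ (the cogenerator property of $E$) forces $L=0$. It is exactly this last ingredient --- that $E$ detects every nonzero module, not only those with a nonzero $k$-quotient --- that your argument is missing. Your observations that $T\neq 0$, that $k$ embeds in $\soc(T)$, and the reduction of the general case to the complete case via $\hom[\comp R]{\Otimes{\comp R}{L}}{T}$ are all correct, but they address the $T$ side of the pairing; the obstruction lives on the $L$ side.
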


\begin{proof}
Assume that $\hom LT=0$.

Case 1: $T=E$.   Because $\hom LE=0$, we have
$L^{\vee\vee}=0$.  Since the map $\delta_L^E
:L\rightarrow L^{\vee\vee}$ is injective by Fact~\ref{fact110203}, 
we conclude that $L=0$.

Case 2: $R$ is complete.  
Then $T$ is Matlis reflexive and we have $0=\hom LT\cong\hom{\md{T}}{\md{L}}$ from Lemma~\ref{lem110120}.
Since $\md{T}$ is semidualizing by Proposition~\ref{prop110113},
we have $\md{L}=0$ by~\cite[Proposition 3.6]{holm:fear}.
By Case 1, we conclude that $L=0$.

Case 3: the general case.
The first step in the following sequence is by assumption
$$0=\hom LT
\cong\hom {L}{\hom[\comp R]{\comp R}{T}}
\cong\hom[\comp R]{\Otimes{\comp R}{L}}{T}.$$
The second step follows from the fact that $T$ is artinian and hence
has an $\comp R$ structure and the third step is from Hom-tensor adjointness.
Since $T$ is a quasidualizing $\comp R$-module, we can apply Case 2 to conclude
that $\Otimes{\comp R}{L}=0$.  Then $L=0$ because $\comp R$
is faithfully flat over $R$.
\end{proof}

\begin{question}
Does a version of  Lemma~\ref{lem110125} hold for $\Otimes{T}{-}$
as in~\cite{holm:fear}?
\end{question}

\begin{thm}\label{prop110125}
Assume that $R$ is complete and let $T$ be a quasidualizing
$R$-module.  Then $\mathcal{G}_T^{full}(R)$ satisfies the two-of-three condition.
\end{thm}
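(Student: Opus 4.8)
The plan is to reduce to the theory of totally $C$-reflexive modules over the complete ring $R$, via Matlis duality. The key structural fact available is Proposition~\ref{prop110225} together with Propositions~\ref{prop110130} and~\ref{prop120116}, which set up the bijection $\md{(-)}\colon \mathcal{G}_T^{\text{mr}}(R)\to\mathcal{B}_{\md{T}}^{\text{mr}}(R)$, and crucially that quasidualizing modules and semidualizing modules swap under $\md{(-)}$ (Theorem~\ref{prop110113}). However, the statement is about $\mathcal{G}_T^{full}(R)$, not $\mathcal{G}_T^{\text{mr}}(R)$, so the first task is to observe that in fact every derived $T$-reflexive $R$-module is Matlis reflexive: if $\delta_L^T\colon L\to\hom{\hom LT}{T}$ is an isomorphism, then since $T$ is artinian, $\hom LT$ is a submodule of $\hom L{E^{b}}\cong(\md L)^{b}$-type object, and iterating, $\hom{\hom LT}{T}$ is built from copies of $\hom{(\text{noeth})}{E^{b}}$, hence artinian; so $L$ is artinian, hence Matlis reflexive by Fact~\ref{fact110120b}. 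Thus $\mathcal{G}_T^{full}(R)=\mathcal{G}_T^{\text{mr}}(R)=\mathcal{G}_T^{\text{artin}}(R)$, and it suffices to prove the two-of-three condition for $\mathcal{G}_T^{\text{artin}}(R)$.

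\medskip

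\noindent Given a short exact sequence $0\to L_1\to L_2\to L_3\to 0$ with two terms in $\mathcal{G}_T^{\text{artin}}(R)$, the first step is to note that the third term is automatically artinian (the class of artinian modules satisfies two-of-three), so all three are artinian, hence Matlis reflexive. Applying $\md{(-)}$—which is exact since $E$ is injective—yields a short exact sequence $0\to\md{L_3}\to\md{L_2}\to\md{L_1}\to 0$ of noetherian $R$-modules, two of which lie in $\mathcal{B}_{\md{T}}^{\text{noeth}}(R)=\mathcal{A}_{\md{T}}^{\text{noeth}}(R)$ (by Corollary~\ref{prop110224} / Corollary~\ref{cor120203}, using the identification $\mathcal{G}_T^{\text{artin}}(R)=\mathcal{A}_{\md{T}}^{\text{artin}}(R)$ and duality). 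So the problem is transported to: does $\mathcal{B}_{\md{T}}^{\text{noeth}}(R)$ (equivalently, the Auslander class $\mathcal{A}_{\md{T}}(R)$ restricted to noetherian modules) satisfy two-of-three? Here $\md{T}$ is a genuine semidualizing $R$-module by Theorem~\ref{prop110113}.

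\medskip

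\noindent This last question is a known and standard fact in the semidualizing literature: for a semidualizing module $C$ over any noetherian ring, the Auslander class $\mathcal{A}_C(R)$ (and the Bass class $\mathcal{B}_C(R)$) is closed under the two-of-three property for short exact sequences — this follows from the long exact sequences in $\tor^R_*(C,-)$ and $\ext^*_R(C,-)$ together with the five lemma applied to the natural maps $\gamma^C$ (resp. $\xi^C$); see for instance the standard references on Foxby classes. I would cite this (e.g.\ Holm--White or Christensen--Holm), or give the two-line argument: in $0\to X_1\to X_2\to X_3\to 0$ with two terms in $\mathcal{A}_C(R)$, the vanishing of $\tor_i$ and $\ext^i$ for the third follows from the long exact sequences, and the biduality/evaluation isomorphism for the third follows from the five lemma applied to the morphism of short exact sequences induced by $\gamma^C$ (one needs the relevant $\tor$-vanishing to keep the bottom row exact, which is exactly what was just established). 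Pulling this back through $\md{(-)}$ gives that the third term of the original sequence lies in $\mathcal{G}_T^{\text{artin}}(R)=\mathcal{G}_T^{full}(R)$, completing the proof.

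\medskip

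\noindent \textbf{Main obstacle.} The only subtle point is the very first reduction: showing $\mathcal{G}_T^{full}(R)=\mathcal{G}_T^{\text{artin}}(R)$, i.e.\ that derived $T$-reflexivity forces the module to be artinian. One must argue carefully that $\hom LT$ and then $\hom{\hom LT}{T}$ are artinian — using that $T$ embeds in a finite sum of copies of $E$ (so $\hom LT$ embeds in a power of $\md L$, but that only gives information if $\md L$ is already controlled) — so it is cleaner to argue: $\hom{\hom LT}{T}$ is a submodule of $\hom{\hom LT}{E^{b_0}}\cong(\hom LT)^{\vee\,b_0}$ only if $T\into E^{b_0}$ and $\hom{L}{T}$-reflexivity... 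Actually the robust route is: $\hom LT$ is artinian because it is a quotient of a $\hom L{E^{b_0}}$? That is false in general. The genuinely correct argument is that $\Hom_R(L,T)\cong\Hom_{\comp R}(L\otimes_R\comp R, T)$ is artinian over $\comp R$ whenever... — the safest rigorous path is to note $T$ has a free-looking injective resolution $I$ with $I_i\cong E^{b_i}$ (Lemma~\ref{lem110207a}), so $\hom{\hom LT}{T}$ sits inside $\hom{\hom LT}{I_0}\cong\hom{\hom LT}{E}^{b_0}=(\hom LT)^{\vee\,b_0}$; and $\hom LT$ is $\m$-torsion (as $T$ is), so $(\hom LT)^\vee$ is noetherian over $\comp R$ by Fact~\ref{fact110120a}-type reasoning, whence $\hom{\hom LT}{T}\cong L$ is artinian. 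Getting this reduction airtight is where the real care is needed; everything after it is a transport-of-structure along Matlis duality plus the classical two-of-three property of Auslander classes.
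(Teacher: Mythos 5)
There is a genuine gap, and it is exactly at the point you flagged: the opening reduction $\mathcal{G}_T^{full}(R)=\mathcal{G}_T^{\text{artin}}(R)$ is false. Take $T=E$ and $L=R$ with $R$ complete but not artinian. The Ext conditions of Definition~\ref{defn110207} hold automatically because $E$ is injective, and $\delta_R^E$ is an isomorphism since $\md{R}\cong E$ and $\md{E}\cong\comp{R}\cong R$; hence $R\in\mathcal{G}_E^{full}(R)$, yet $R$ is not artinian. (More generally $\mathcal{G}_E^{full}(R)$ is precisely the class of Matlis reflexive modules, which over a complete ring is the class of mini-max modules, not the artinian ones.) The repair you sketch does not work either: $\hom LT$ need not be $\m$-torsion for arbitrary $L$ (e.g.\ $\hom EE\cong\comp{R}$), and even for an $\m$-torsion module the Matlis dual need not be noetherian, so you cannot conclude that $\hom{\hom LT}{T}$, and hence $L$, is artinian. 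Everything downstream of this reduction --- dualizing the sequence, landing in $\mathcal{B}_{\md{T}}^{\text{noeth}}(R)$ over the semidualizing module $\md{T}$ of Theorem~\ref{prop110113}, and invoking the classical two-of-three property of Foxby classes --- is a perfectly good proof for the subclasses $\mathcal{G}_T^{\text{artin}}(R)$, $\mathcal{G}_T^{\text{noeth}}(R)$, and $\mathcal{G}_T^{\text{mr}}(R)$ (compare Corollary~\ref{cor120111}), but it cannot reach $\mathcal{G}_T^{full}(R)$, which is what the theorem asserts.

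The paper's proof is direct and imposes no finiteness on the $L_i$. Vanishing of $\Ext{i}{L_3}{T}$ and $\Ext{i}{\hom{L_3}{T}}{T}$ comes from the long exact sequences; the one delicate case, $\Ext{1}{L_3}{T}$ when $L_1,L_2\in\mathcal{G}_T^{full}(R)$, is handled by showing $\hom{\Ext{1}{L_3}{T}}{T}=0$ via a diagram chase and then applying Lemma~\ref{lem110125}, which says that $\hom LT=0$ forces $L=0$ (``quasidualizing implies faithfully quasidualizing''); finally the biduality isomorphism for $L_3$ follows from the Snake Lemma. That faithfulness lemma is the ingredient your argument never uses and cannot avoid. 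If you want to keep a Matlis-duality strategy for the full class, you would first have to prove that every derived $T$-reflexive module is Matlis reflexive --- a nontrivial claim that the paper neither states nor needs.
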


\begin{proof}
Let \begin{equation}\label{eqprop110125}
0\rightarrow L_1\xrightarrow{f} L_2\xrightarrow{g} L_3\rightarrow 0
\end{equation}
be an exact sequence of $R$-module homomorphisms
and let $(-)^T=\hom -T$.  
There are two conditions to check and three cases.
We will deal with the case when $L_1, L_2\in\mathcal{G}_T^{full}(R)$.
The case where $L_2, L_3\in\mathcal{G}_T^{full}(R)$
is similar. The case where $L_1, L_3\in\mathcal{G}_T^{full}(R)$ is also
similar but easier.

Assume that $L_1, L_2\in\mathcal{G}_T^{full}(R)$.
Then we have $\Ext{i}{L_1}{T}=0=\Ext{i}{L_2}{T}$
for all $i>0$.
The following portion of 
the long exact sequence in $\Ext{i}{-}{T}$ associated
to the short exact sequence~\eqref{eqprop110125} 
\begin{equation}\label{eqprop110125a}
\cdots\rightarrow\Ext{i-1}{L_1}{T}
\rightarrow\Ext{i}{L_3}{T}\rightarrow
\Ext{i}{L_2}{T}\rightarrow
\Ext{i}{L_1}{T}\rightarrow\cdots
\end{equation}
shows that $\Ext{i}{L_3}{T}=0$ for all $i> 1$.
For the case where $i=1$, we apply $(-)^T$ 
to the following portion of the long exact sequence
$$0\rightarrow(L_3)^T\rightarrow(L_2)^T\rightarrow
(L_1)^T\rightarrow\Ext{1}{L_3}{T}\rightarrow 0$$
to obtain exactness in the top row of the following commutative diagram
%
$$\xymatrix {
0\ar[r]
&(\Ext{1}{L_3}{T})^T\ar[r]
&(L_1)^{TT}\ar[r]^{ f^{TT}}
&(L_2)^{TT}\\
&0\ar[r]
&L_1\ar[r]^{f}\ar[u]^{\cong}_{\delta_{L1}^T}
&L_2\ar[u]^{\cong}_{\delta_{L_2}^T}.
}$$
Since $f$ is an injective map, the diagram shows that  $f^{TT}$ is
an injective map.  Hence we have  $(\Ext{1}{L_3}{T})^T=0$.
From Lemma~\ref{lem110125} we conclude that $\Ext{1}{L_3}{T}=0$.

Next we show that $\Ext{i}{\hom{L_3}{T}}{T}=0$ for all $i> 0$.
From the argument above we have the exact sequence
\begin{equation}\label{eqprop110125b}
0\rightarrow(L_3)^T
\rightarrow(L_2)^T
\rightarrow(L_1)^T\rightarrow 0.
\end{equation}
In a similar, but easier,  manner than above, the long exact sequence in
$\Ext{i}{-}{T}$ shows that if $L_1, L_2\in\mathcal{G}_T^{full}(R)$, then
$\Ext{i}{\hom{L_3}{T}}{T}=0$ for all $i>0$.

Lastly, we show that the map $\delta_{L_3}^T$ is an isomorphism.
From the 
short exact sequence~\eqref{eqprop110125} and
as a consequence of the above argument together with the
short exact sequence~\eqref{eqprop110125b},
we obtain the following commutative diagram
with exact rows
$$\xymatrix{
0\ar[r]
&L_1\ar[r]^{f}\ar[d]^{\delta_{L_1}^T}_{\cong}
&L_2\ar[r]^{g}\ar[d]^{\delta_{L_2}^T}_{\cong}
&L_3\ar[r]\ar[d]^{\delta_{L_3}^T}
&0\\
0\ar[r]
&(L_1)^{TT}\ar[r]^-{f^{TT}}
&(L_2)^{TT}\ar[r]^-{g^{TT}}
&(L_3)^{TT}\ar[r]
&0.
}$$
Since $L_1,L_2$ are in $\mathcal{G}_T^{full}(R)$, 
the maps $\delta_{L_1}^T$ and $\delta_{L_2}^T$ are
isomorphisms.  By the Snake Lemma,
we conclude that $\delta_{L_3}^T$ is an isomorphism.
\end{proof}

\begin{cor}\label{cor120111}
Assume that $R$ is complete and let $T$ be a quasidualizing
$R$-module.  Then $\mathcal{G}_T^{\text{artin}}(R)=\mathcal{A}_{\md{T}}^{\text{artin}}(R)$,
$\mathcal{G}_T^{\text{noeth}}(R)=\mathcal{A}_{\md{T}}^{\text{noeth}}(R)$,
and $\mathcal{G}_T^{mr}(R)$
satisfy the two-of-three condition.
\end{cor}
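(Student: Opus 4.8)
The plan is to deduce everything from Theorem~\ref{prop110125} together with the elementary observation that if two classes $\mathcal{C}$ and $\mathcal{D}$ each satisfy the two-of-three condition, then so does $\mathcal{C}\cap\mathcal{D}$: given an exact sequence $0\to L_1\to L_2\to L_3\to 0$ with two of the $L_j$ in $\mathcal{C}\cap\mathcal{D}$, the third lies in $\mathcal{C}$ and in $\mathcal{D}$, hence in $\mathcal{C}\cap\mathcal{D}$. So it suffices to exhibit each of the three classes in the statement as the intersection of $\mathcal{G}_T^{full}(R)$ with a class already known to have the two-of-three property.

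First I would record that the classes of artinian $R$-modules and of noetherian $R$-modules each satisfy the two-of-three condition; this is standard and is already noted in the discussion preceding Lemma~\ref{lem110125}. Next I would check that the class of Matlis reflexive $R$-modules satisfies the two-of-three condition. Given an exact sequence $0\to L_1\to L_2\to L_3\to 0$, apply the exact functor $(-)^\vee$ twice to get a commutative diagram with exact rows whose vertical maps are the biduality maps $\delta_{L_1}^E$, $\delta_{L_2}^E$, $\delta_{L_3}^E$. By Fact~\ref{fact110203} each of these maps is injective, so the Snake Lemma yields a short exact sequence of their cokernels; if two of the three cokernels vanish, exactness forces the third to vanish as well, so the corresponding biduality map is an isomorphism. (This is essentially the same Snake Lemma chase already carried out in the proof of Theorem~\ref{prop110125}.) Hence Matlis reflexive $R$-modules have the two-of-three property.

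Finally, by definition $\mathcal{G}_T^{\text{artin}}(R)=\mathcal{G}_T^{full}(R)\cap\{\text{artinian }R\text{-modules}\}$, $\mathcal{G}_T^{\text{noeth}}(R)=\mathcal{G}_T^{full}(R)\cap\{\text{noetherian }R\text{-modules}\}$, and $\mathcal{G}_T^{\text{mr}}(R)=\mathcal{G}_T^{full}(R)\cap\{\text{Matlis reflexive }R\text{-modules}\}$. Since $\mathcal{G}_T^{full}(R)$ satisfies the two-of-three condition by Theorem~\ref{prop110125}, the intersection observation shows each of these three subclasses does too; the asserted equalities $\mathcal{G}_T^{\text{artin}}(R)=\mathcal{A}_{\md{T}}^{\text{artin}}(R)$ and $\mathcal{G}_T^{\text{noeth}}(R)=\mathcal{A}_{\md{T}}^{\text{noeth}}(R)$ are exactly Corollary~\ref{cor110225c}. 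The only point requiring genuine argument is the two-of-three property for Matlis reflexive modules, and even that reduces to a routine diagram chase, so no real obstacle is expected.
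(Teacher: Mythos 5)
Your proof is correct and follows the same route as the paper, which simply cites Theorem~\ref{prop110125} and Corollary~\ref{cor110225c}; your intersection observation is exactly the implicit mechanism. The only detail you add beyond the paper is the explicit Snake Lemma verification that Matlis reflexive modules satisfy the two-of-three condition, which is a correct and worthwhile filling-in of a step the paper leaves unstated.
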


\begin{proof}
Apply Theorem~\ref{prop110125} and Corollary~\ref{cor110225c}.
\end{proof}

\providecommand{\bysame}{\leavevmode\hbox to3em{\hrulefill}\thinspace}
\providecommand{\MR}{\relax\ifhmode\unskip\space\fi MR }
\providecommand{\MRhref}[2]{%
  \href{http://www.ams.org/mathscinet-getitem?mr=#1}{#2}
}
\providecommand{\href}[2]{#2}

\end{document}